\documentclass[10pt,leqno]{amsart}
\topmargin= .5cm
\textheight=22.5cm
\textwidth= 34.30cc
\baselineskip=16pt
\usepackage{indentfirst, amssymb,amsmath,amsthm}
\evensidemargin= .7cm
\oddsidemargin= .7cm
\newtheorem*{theoA}{Theorem A}
\newtheorem*{theoB}{Theorem B}
\newtheorem*{theoC}{Theorem C}
\newtheorem*{theoD}{Theorem D}
\newtheorem*{theoE}{Theorem E}
\newtheorem*{theoF}{Theorem F}

\newtheorem{theo}{Theorem}[section]
\newtheorem{lem}{Lemma}[section]
\newtheorem{cor}{Corollary}[section]

\newtheorem{exm}{Example}[section]
\newtheorem{defi}{Definition}[section]
\newtheorem{rem}{Remark}[section]
\newcommand{\ol}{\overline}
\newcommand{\be}{\begin{equation}}
\newcommand{\ee}{\end{equation}}
\newcommand{\beas}{\begin{eqnarray*}}
\newcommand{\eeas}{\end{eqnarray*}}
\newcommand{\bea}{\begin{eqnarray}}
\newcommand{\eea}{\end{eqnarray}}
\newcommand{\lra}{\longrightarrow}
\numberwithin {equation}{section}
\numberwithin {lem}{section}
\numberwithin {theo}{section}
\numberwithin {defi}{section}
\numberwithin {rem}{section}
\numberwithin {cor}{section}

\begin{document}
	\title[Weighted sharing and uniqueness of $\boldmath{L}$-Function ]{ Weighted sharing and uniqueness of $\boldmath{L}$-Function with certain class of meromorphic function}
		\date{}
	\author{Abhijit Banerjee\;\;\; and \;\;\;Arpita Kundu}
	\date{}
	\address{ Department of Mathematics, University of Kalyani, West Bengal 741235, India.}
	\email{abanerjee\_kal@yahoo.co.in, abanerjeekal@gmail.com}
	
	\address{Department of Mathematics, University of Kalyani, West Bengal 741235, India.}
	\email{arpitakundu.math.ku@gmail.com}
	\maketitle
	\let\thefootnote\relax
	\footnotetext{2010 Mathematics Subject Classification:  Primary 11M36; Secondary 30D35}
	\footnotetext{Key words and phrases: Meromorphic function, $L$ function, uniqueness, weighted sharing.}
	\footnotetext{Type set by \AmS -\LaTeX}
	\begin{abstract}The purpose of the paper is to study the uniqueness problem of a $L$ function in the Selberg class sharing one or two sets with an arbitrary meromorphic function having finite poles. We manipulate the notion of weighted sharing of sets to improve one result of Yuan-Li-Yi [Value distribution of $L$-functions and uniqueness questions of F. Gross, Lithuanian Math. J., $\mathbf{58(2)}$(2018), 249-262]. More importantly, we have pointed out a number of gaps in all the results of  Sahoo-Halder [Results on $L$ functions and certain uniqueness question of Gross, Lithuanian Math. J., $\mathbf{60(1)}$(2020), 80-91] which actually makes the validity of the same paper under question. As an attempt to rectify the results of Sahoo-Halder we have presented the accurate forms and proof of the results in a compact and convenient manner.
		
	\end{abstract}
\section{introduction}	
By a meromorphic function we shall always mean a meromorphic function in the
complex plane. We adopt the standard notations of Nevanilinna theory of meromorphic functions as explained in \cite{W.K.Hayman_64}.
\par  Let $f$ and $g$ be two non-constant meromorphic functions and let $S$ be a subset of distinct elements in the complex plane. For some $a\in\mathbb{C}\cup\{\infty\}$, we define $E_{f}(S)= \displaystyle\cup_{a\in s}\{z:f(z)-a=0\}$, where each point is counted according to its multiplicity. If we do not count the multiplicity then the set $\cup_{a\in S}\{z:f(z)-a=0\}$ is denoted by $\ol E_{f}(S)$. If $E_{f}(S)=E_{g}(S)$ then we say $f$ and $g$ share the $S$ CM. On the other hand, if $\ol E_{f}(S)=\ol E_{g}(S)$ then we say $f$ and $g$ share the $S$ IM. This paper deals with the uniqueness problems of value sharing and set sharing related to $L$-functions and an arbitrary meromorphic function in $\mathbb{C}$.


\par In 1989, Selberg \cite{selberg-92} introduced a new class of Dirichlet series, called the Selberg class, which later became an important field of research in analytic number theory. In this paper, by an $L$-function we mean a Selberg class function with the Riemann zeta function $\zeta(s)=\sum_{n=1}^{\infty}\frac{1}{n^{s}}$
 as the prototype. The
Selberg class $\mathcal{S}$ of $L$-functions is the set of all Dirichlet series $\mathcal{L}(s)=\sum_{n=1}^{\infty}
a(n)n^{-s}$ of a complex variable
$s$ that satisfy the following axioms (see \cite{selberg-92}):
\\$\boldsymbol{(i)}$ Ramanujan hypothesis: $a(n) \ll n^{\epsilon}$ for every $\epsilon > 0$.
\\$\boldsymbol{(ii)}$ Analytic continuation: There is a non-negative integer $k$ such that $(s- 1)^{k}\mathcal{L}(s)$ is an entire function of finite order.
\\$\boldsymbol{(iii)}$ Functional equation: $\mathcal{L}$ satisfies a functional equation of type $$\Lambda _{\mathcal{L}}(s) = \omega\ol {\Lambda _{\mathcal{L}}(1 - \ol s)},$$ where $$\Lambda _{\mathcal{L}}(s)=\mathcal{L}(s)Q^{s}\prod_{j=1}^{K}\Gamma(\lambda _{j}s + \nu_{j})$$ with positive real numbers Q, $\lambda_{j}$ and complex numbers $\nu_{j} , \omega$  with 
$ Re \nu_{j}\geq 0$ and $|\omega|=1$.
\\$\boldsymbol{(iv)}$ Euler product hypothesis : $\mathcal{L}$ can be written over prime as $$\mathcal{L}(s)=\prod_{p}\exp\left(\sum_{k=1}^{\infty}b(p^{k})/p^{ks}\right)$$ with suitable coefficients $b(p^{k})$ satisfying $b(p^{k})\ll p^{k\theta}$ for some $\theta<1/2$ where the product is taken over all prime numbers $p$.
\par The Ramanujan hypothesis implies that the Dirichlet series $\mathcal{L}$ converges absolutely in the half-plane $Re( s) > 1$ and then is extended meromorphically. The degree $d_{\mathcal{L}}$ of an $L$-function $\mathcal{L}$ is defined to be \beas d_{\mathcal{L}}=2\sum_{j=1}^{K}\lambda_{j},\eeas

\par where $\lambda_{j}$ and $K$ are respectively the positive real number and the positive integer as in axiom (iii) above.
For the last couple of years or so, the researchers  have given priority to the investigations on the value distributions of $L$-functions (see \cite{Graun-Grahl-Steuding}, \cite{Hu_LI_Can-16}, \cite{B.Q.LI-Proc.Am-10}, \cite{Li-Yi_Nachr}, \cite{Steuding-Sprin-07}). The value distribution of an $L$-function $\mathcal{L}$ concerns about the roots of the equation $\mathcal{L}(s)=c$ for some $c\in \mathbb{C}\cup\{\infty\}$. In case we talk about the sharing of sets by an $L$-function, we refer the reader to the first paragraph of this paper where all the definitions discussed also applicable to an $L$-function. Regarding uniqueness problem of two $\mathcal{L}$ functions, in 2007, Steuding [p. 152, \cite{Steuding-Sprin-07} ] proved that the number of shared values can be reduced significantly. Below we invoke the result.

\begin{theoA}
	If two $L$-functions $\mathcal{L}_{1}$ and $\mathcal{L}_{2}$ with $a(1)=1$ share a complex value $c\;(\not=\infty)$ CM, then $\mathcal{L}_{1}=\mathcal{L}_{2}$.\end{theoA}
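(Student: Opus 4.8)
The plan is to reduce the identity $\mathcal{L}_1=\mathcal{L}_2$ to a statement about a single auxiliary quotient, exploiting the two structural features of Selberg class functions that an arbitrary meromorphic function need not possess: by axiom (ii) each $\mathcal{L}_j$ is meromorphic of finite order whose only possible pole lies at $s=1$, and, since the Dirichlet series converges absolutely for $\mathrm{Re}\,s>1$ with $a(1)=1$, one has $\mathcal{L}_j(s)\to 1$ as $\sigma=\mathrm{Re}\,s\to+\infty$. The normalization $a(1)=1$ is exactly what feeds this limit, so it will be the engine of the argument.

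First I would set $H=\dfrac{\mathcal{L}_1-c}{\mathcal{L}_2-c}$ and locate its zeros and poles. Because $\mathcal{L}_1$ and $\mathcal{L}_2$ share $c$ CM, the zeros of $\mathcal{L}_1-c$ and of $\mathcal{L}_2-c$ coincide with multiplicities and hence cancel in $H$; the only surviving zeros and poles of $H$ come from the poles of $\mathcal{L}_2-c$ and of $\mathcal{L}_1-c$, all of which sit at $s=1$. Thus $H$ is meromorphic with neither zeros nor poles away from the single point $s=1$, so $H(s)=(s-1)^m e^{g(s)}$ for some integer $m$ and entire $g$. Since $\mathcal{L}_1,\mathcal{L}_2$ are of finite order, so is $H$, which forces $g$ to be a polynomial $P$. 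This reduces the problem to showing $m=0$ and $P\equiv 0$.

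To pin $H$ down I would let $\sigma\to+\infty$. For $c\neq 1$ the limit $\mathcal{L}_j(s)\to1$ gives $H(s)\to\frac{1-c}{1-c}=1$ as $\sigma\to+\infty$, and I would compare this with $|H(\sigma)|=|\sigma-1|^{m}\,|e^{P(\sigma)}|$ along the real axis: a nonconstant $P$ makes $|e^{P(\sigma)}|$ grow or decay super-polynomially (or, when the leading coefficient of $P$ is purely imaginary, makes $H(\sigma)$ oscillate), while $m\neq 0$ contributes a polynomial factor, and neither is compatible with the finite nonzero limit $1$. Hence $m=0$, $P$ is constant, and $H\equiv 1$, i.e.\ $\mathcal{L}_1=\mathcal{L}_2$.

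The main obstacle is the genuine control of the factor $(s-1)^{m}e^{P(s)}$ from half-plane information alone, together with the degenerate case $c=1$, where $\mathcal{L}_j-1\to 0$ and the naive limit of $H$ is indeterminate. There I would instead extract leading Dirichlet terms, writing $\mathcal{L}_j(s)-1\sim a_j(n_j)\,n_j^{-s}$ for the least $n_j\ge 2$ with $a_j(n_j)\neq 0$, so that $H$ behaves like a constant multiple of $(n_2/n_1)^{s}$; matching this against $(s-1)^{m}e^{P(s)}$ again forces $m=0$ and $\deg P\le 1$, after which the absolute convergence of the two Dirichlet series (and, if needed, the Euler product or functional equation to exclude a surviving exponential factor) yields $H\equiv 1$. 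Carrying out that last step cleanly is where the real work lies.
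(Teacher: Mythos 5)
The paper itself offers no proof of Theorem A: it is imported verbatim from Steuding \cite{Steuding-Sprin-07}, and the short argument you outline for $c\neq 1$ is essentially the known simplified proof of Hu and Li \cite{Hu_LI_Can-16}. That part of your proposal is sound: CM sharing kills all zeros and poles of $H=(\mathcal{L}_1-c)/(\mathcal{L}_2-c)$ except possibly at $s=1$, finite order forces $H=(s-1)^m e^{P}$ with $P$ a polynomial, and the uniform limit $\mathcal{L}_j(s)\to 1$ as $\mathrm{Re}\,s\to+\infty$ (which is exactly where $a(1)=1$ and the Ramanujan bound enter) pins down $m=0$ and $P$ constant, hence $H\equiv 1$.

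The genuine gap is the case $c=1$, and it is not a gap you can close: as the paper records in Remark 1.1, Hu and Li produced a counterexample showing Theorem A is \emph{false} for $c=1$, so the statement as transcribed (allowing arbitrary finite $c$) only holds with the restriction $c\neq 1$. Your own asymptotic analysis already exposes the obstruction. If $\mathcal{L}_j(s)-1\sim a_j(n_j)n_j^{-s}$ with $n_1\neq n_2$, then $H(s)\sim C\,(n_2/n_1)^{s}=C\,e^{s\log(n_2/n_1)}$, which is a perfectly admissible zero-free, pole-free function of finite order; nothing in the axioms of the Selberg class rules out $\deg P=1$ here, and neither absolute convergence nor the Euler product nor the functional equation will force $H\equiv 1$. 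So the ``real work'' you defer in the last step does not exist to be done --- the correct resolution is to exclude $c=1$ from the hypothesis rather than to attempt a proof there.
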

	\begin{rem}
		Providing a counterexample, Hu and Li \cite{Hu_LI_Can-16} have pointed out that {\it{Theorem A}} is not true when c = 1. \end{rem} 
	
	Since $L$-functions possess meromorphic continuations, it is quite natural to investigate up to which extent an $L$-function can share values with an arbitrary meromorphic function. In 2010, Li \cite{B.Q.LI-Proc.Am-10}  observed that {\it{Theorem A}} no longer holds for an $L$-function and a meromorphic function, which is clear from the following example.
	\begin{exm}
		For an entire function g, the functions $\zeta$ and $\zeta e^{g}$ share $0$ CM, but $\zeta \not=\zeta e^{g}$.
	\end{exm}
However, considering two distinct complex values, Li \cite{B.Q.LI-Proc.Am-10}  proved the following uniqueness result.
\begin{theoB}
	Let $f$ be a meromorphic function in $\mathbb{C}$ having finitely many poles, and let a and b be any two distinct finite complex values. If $f$ and a non-constant $L$-function $\mathcal{L}$ share $a$ CM and $b$ IM, then $f=\mathcal{L}$.
\end{theoB}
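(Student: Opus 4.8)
The plan is to play the two sharing hypotheses against two structural facts about a non-constant $L$-function $\mathcal{L}$: by axiom (ii) it has at most one pole (at $s=1$), so $\ol N(r,\infty,\mathcal{L})=O(\log r)$, and its Nevanlinna characteristic obeys the standard growth estimate $T(r,\mathcal{L})=\frac{d_{\mathcal{L}}}{\pi}\,r\log r+O(r)$ with $d_{\mathcal{L}}>0$, so that $T(r,\mathcal{L})/r\to\infty$. The first thing I would establish is that $f$ has finite order. Applying the second fundamental theorem to $f$ with the three targets $a,b,\infty$ and using $\ol N(r,a,f)=\ol N(r,a,\mathcal{L})\leq T(r,\mathcal{L})+O(1)$ (from the CM sharing at $a$), $\ol N(r,b,f)=\ol N(r,b,\mathcal{L})\leq T(r,\mathcal{L})+O(1)$ (from the IM sharing at $b$), and $\ol N(r,\infty,f)=O(\log r)$ (finitely many poles), one obtains $T(r,f)\leq 2T(r,\mathcal{L})+S(r,f)+O(\log r)$. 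Removing the exceptional set in the usual way yields $T(r,f)=O(r\log r)$, so $f$ has order at most $1$.

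Next I would convert the CM sharing at $a$ into a factorization. Form $\phi:=\frac{f-a}{\mathcal{L}-a}$. Since $f$ and $\mathcal{L}$ share $a$ CM, the zeros of $f-a$ and of $\mathcal{L}-a$ cancel with multiplicity, so the only zeros and poles of $\phi$ lie over the finitely many poles of $\mathcal{L}$ and of $f$. As $\phi$ has finite order, Hadamard factorization gives $\phi=R\,e^{Q}$ with $R$ a rational function and $Q$ a polynomial of degree at most $1$; in particular $T(r,\phi)=O(r)$, so $T(r,\phi)/(r\log r)\to 0$.

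Then I would feed in the IM sharing at $b$. Writing $f=a+(\mathcal{L}-a)\phi$ and evaluating at any common $b$-point $s_{0}$, where $f(s_{0})=\mathcal{L}(s_{0})=b$, gives $(a-b)\bigl(1-\phi(s_{0})\bigr)=0$, whence $\phi(s_{0})=1$ because $a\neq b$. Thus $\phi-1$ vanishes at every $b$-point of $\mathcal{L}$, and therefore $\ol N(r,b,\mathcal{L})\leq N(r,0,\phi-1)\leq T(r,\phi)+O(1)=O(r)$. This sets up the decisive dichotomy: if $\phi\equiv 1$ then $f-a=\mathcal{L}-a$, i.e. $f=\mathcal{L}$, which is the desired conclusion; while if $\phi\not\equiv 1$ the inequality just obtained forces $\ol N(r,b,\mathcal{L})=O(r)$.

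The main obstacle is precisely to rule out the second alternative, and this is where the quantitative value distribution of $L$-functions is indispensable, as is the hypothesis that $b$ is \emph{finite} (recall that $\infty$ is a deficient value of $\mathcal{L}$, since its poles are finite in number). By the value-distribution theory of $L$-functions, every finite complex value is assumed $\frac{d_{\mathcal{L}}}{\pi}\,r\log r\,(1+o(1))$ times, with at most $O(r)$ multiple points, so that $\ol N(r,b,\mathcal{L})\sim\frac{d_{\mathcal{L}}}{\pi}\,r\log r$, which grows strictly faster than any $O(r)$ term. This contradicts $\ol N(r,b,\mathcal{L})=O(r)$, so $\phi\not\equiv 1$ is impossible. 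Hence $\phi\equiv 1$ and $f=\mathcal{L}$. The only genuinely delicate point to make airtight is the lower bound $\ol N(r,b,\mathcal{L})\neq O(r)$ for the \emph{reduced} counting function, i.e. that finite values of $\mathcal{L}$ are non-deficient and essentially simply attained; everything else is a routine combination of the first and second fundamental theorems with Hadamard factorization.
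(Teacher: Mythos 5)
First, a remark on context: Theorem B is quoted in this paper from Li \cite{B.Q.LI-Proc.Am-10} without proof, so there is no in-paper argument to compare yours against; I am assessing your proposal on its own merits. Your preparatory steps are sound: the order estimate for $f$ via the second fundamental theorem, the factorization $\phi=\frac{f-a}{\mathcal{L}-a}=Re^{Q}$ with $R$ rational and $\deg Q\leq 1$ (hence $T(r,\phi)=O(r)$), and the observation that $\phi=1$ at every common $b$-point, so that $\phi\not\equiv 1$ forces $\ol N(r,b;\mathcal{L})\leq N(r,0;\phi-1)=O(r)$. This is indeed the standard reduction.

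The gap is in the very last step, and it is genuine. You rule out $\ol N(r,b;\mathcal{L})=O(r)$ by asserting that every finite value is taken $\frac{d_{\mathcal{L}}}{\pi}r\log r\,(1+o(1))$ times ``with at most $O(r)$ multiple points'', so that the \emph{reduced} counting function is $\sim\frac{d_{\mathcal{L}}}{\pi}r\log r$. That is not a known theorem. The Riemann--von Mangoldt type formula for $c$-values of $L$-functions (see \cite{Steuding-Sprin-07}) counts $c$-points \emph{with multiplicity}; for $b=0$ and $\mathcal{L}=\zeta$ your assertion would say that all but $O(T)$ of the zeros of the Riemann zeta function up to height $T$ are simple, which is a famous open problem (unconditionally only a positive proportion of simple zeros is known). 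The available pointwise multiplicity bound $O(\log r)$ for $b$-points at height $r$ only yields that the number of distinct $b$-points in $|s|\leq r$ is $\gg r$, which does not contradict $\ol N(r,b;\mathcal{L})=O(r)$ since the implied constants are not controlled. The standard way to close the argument is to extract the needed lower bound for $\ol N(r,b;\mathcal{L})$ from the \emph{sharing hypotheses} rather than from $\mathcal{L}$ alone: writing $W=Re^{Q}$ and $\omega=a+(b-a)W$, one has $f-\omega=W(\mathcal{L}-b)$, so the $\omega$-points of $f$ and the $b$-points of $f$ both have reduced counting function $\ol N(r,b;\mathcal{L})+O(r)$ (the $O(r)$ absorbing zeros and poles of $W$ and of $W-1$); Nevanlinna's theorem on three small functions applied to $f$ with the targets $b$, $\omega$, $\infty$ then gives $T(r,f)\leq 2\ol N(r,b;\mathcal{L})+O(r)$, and combining this with $\ol N(r,b;\mathcal{L})=O(r)$ and $T(r,f)=T(r,\mathcal{L})+O(r)\sim\frac{d_{\mathcal{L}}}{\pi}r\log r$ yields the desired contradiction. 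Without some step of this kind your proof does not close.
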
  
\par Next to streamline all the results we are going to demonstrate onward, let us define the two polynomials $P(w)$, $P_{1}(w)$ as follows: $$P(w)=w^{n}+aw^{m}+b\;\; \text {and}\;\;P_{1}(w)=w^{n}+aw^{n-m}+b$$ where $a,\;b\in\mathbb{C}\backslash\{0\}$ and $n$, $m$ are two positive integers such that gcd$(n,m)=1$. \par In view of {\it Lemma {\ref{l2.4}}} proved afterwards, we see that both $P(w)$ and $P_{1}(w)$ can have at most one multiple zero. Next corresponding to the polynomials $P(w)$, $P_{1}(w)$, let us define two sets $S$ and $S_{1}$ as follows: \bea\label{e1.1} S=\{w:P(w)=0\}=\{\alpha_{1},\alpha_{2},\ldots,\alpha_{l}\}\eea and  \bea\label{e1.2} S_{1}=\{w:P_{1}(w)=0\}=\{\beta_{1},\beta_{2},\ldots,\beta_{l}\}\eea where $n-1\leq l\leq n$.
\par Inspired by the famous question of Gross \cite{G.F_Springer(1977)} for meromorphic functions, Yuan-Li-Yi \cite{Yan-Li-Yi_Lith} proposed the question ``what can be said about the relationship
between a meromorphic function $f$ and an $L$-function $\mathcal{L}$ if $f$ and $\mathcal{L}$ share one or two finite sets?''   
	In this respect, to find the relationship between a meromorphic function $f$ and an $L$-function $\mathcal{L}$ sharing one or two finite sets, Yuan-Li-Yi \cite{Yan-Li-Yi_Lith} proved the following uniqueness result.	
\begin{theoC}\cite{Yan-Li-Yi_Lith}
 Let $f$ be a meromorphic function having finitely many poles in $\mathbb{C}$ and let ${\mathcal{L} } $ be a non-constant $L$-function. 	Let $S$ be defined as in (\ref{e1.1}) and $n(\geq 5)>m$ and $c\in\mathbb{C}\setminus S\cup\{0\}$. If $f$ and $\mathcal{L}$ share $S$ CM and $c$ IM, then $f= \mathcal{L}$.
\end{theoC}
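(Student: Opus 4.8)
The plan is to convert the set-sharing hypothesis into a single multiplicative relation between $P(f)$ and $P(\mathcal L)$, to show that the multiplier is a small function, and then to kill it using the $c$-sharing together with the algebraic rigidity of $P$ coming from $\gcd(n,m)=1$ and $n\ge 5$.

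First I would record the structural facts. By axiom (ii), $\mathcal L$ has at most one pole (at $s=1$), so $N(r,\mathcal L)=O(\log r)=S(r,\mathcal L)$, while by Steuding's asymptotic $T(r,\mathcal L)=\frac{d_{\mathcal L}}{\pi}r\log r+O(r)$, so $\mathcal L$ has order one and any meromorphic $g$ with $T(r,g)=O(r)$ satisfies $T(r,g)=S(r,\mathcal L)$. Since $f$ has finitely many poles, $N(r,f)=O(\log r)$ as well. By \emph{Lemma \ref{l2.4}}, $P$ has all simple zeros (the case of one double zero is treated identically with the obvious bookkeeping), so $E_f(S)$ and $E_{\mathcal L}(S)$ are exactly the zero divisors of $P(f)$ and $P(\mathcal L)$, and the CM hypothesis says these divisors coincide.

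Set $\phi=P(f)/P(\mathcal L)$. Since the zeros cancel and the only possible zeros and poles of $\phi$ lie over the finitely many poles of $f$ and $\mathcal L$, $\phi$ is meromorphic on $\mathbb{C}$ with finitely many zeros and poles, whence $\phi=R\,e^{\gamma}$ with $R$ rational and $\gamma$ entire. A Second Main Theorem estimate for $f$ against the $l\ge n-1\ge 4$ points of $S$, together with $\ol N(r,1/P(f))=\ol N(r,1/P(\mathcal L))\le nT(r,\mathcal L)+O(1)$ and $\ol N(r,f)=S(r,f)$, gives $(l-1)T(r,f)\le nT(r,\mathcal L)+S(r,f)$; hence $f$ has order at most one, $\gamma$ is a polynomial of degree at most one, and therefore $T(r,\phi)=O(r)=S(r,\mathcal L)$, i.e. $\phi$ is a small function with respect to $\mathcal L$.

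The heart of the matter is to prove $\phi\equiv 1$, and this is where the $c$-sharing enters and where I expect the real difficulty. Writing $P(f)=\phi\,P(\mathcal L)$ and evaluating at any point where $\mathcal L=c$ (at which $f=c$ as well, by the IM hypothesis) gives $P(c)=\phi\,P(c)$, and since $c\notin S$ forces $P(c)\ne 0$, we obtain $\phi=1$ at every $c$-point of $\mathcal L$. If $\phi\not\equiv 1$, then $\ol N(r,1/(\mathcal L-c))\le N(r,1/(\phi-1))\le T(r,\phi)+O(1)=S(r,\mathcal L)$, so $c$ would satisfy $\ol N(r,1/(\mathcal L-c))=S(r,\mathcal L)$, contradicting the value distribution of $L$-functions, for which $\ol N(r,1/(\mathcal L-c))$ has the same order as $T(r,\mathcal L)$ for every finite $c$ (the only genuinely exceptional value being $\infty$). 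Hence $\phi\equiv 1$ and $P(f)\equiv P(\mathcal L)$. Finally I would extract $f=\mathcal L$: putting $h=f/\mathcal L$, the identity $f^{n}-\mathcal L^{n}=-a(f^{m}-\mathcal L^{m})$ becomes $\mathcal L^{\,n-m}(h^{n}-1)=-a(h^{m}-1)$. If $h$ were non-constant, then at each of the $n-1\ge 4$ points $\eta$ with $\eta^{n}=1\ne\eta$ (which, since $\gcd(n,m)=1$, are not cancelled by the zeros of $h^{m}-1$) the relation forces $\mathcal L$ to have a pole; as $\mathcal L$ has at most one pole, $h$ attains each such value only $O(\log r)$ times, and the Second Main Theorem yields $(n-3)T(r,h)\le S(r,h)$, impossible for $n\ge 5$. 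Thus $h$ is constant, whence $h^{n}=h^{m}=1$ forces $h^{\gcd(n,m)}=h=1$, i.e. $f=\mathcal L$. The main obstacle throughout is the step $\phi\equiv 1$: controlling the exponential multiplier and ruling out $c$ as an exceptional value is precisely where both the finiteness of the poles of $f$ and the special value distribution of $L$-functions are indispensable.
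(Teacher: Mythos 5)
First, a caveat on the comparison: this paper does not prove \emph{Theorem C} at all --- it is imported verbatim from Yuan--Li--Yi \cite{Yan-Li-Yi_Lith} --- so your argument can only be measured against the machinery the paper uses for its own \emph{Theorems \ref{t1.1}--\ref{t1.3}}. Your route is genuinely different from that machinery. You exploit the CM hypothesis in full by forming the quotient $\phi=P(f)/P(\mathcal L)$, showing $\phi=Re^{\gamma}$ with $R$ rational and $\gamma$ a polynomial of degree at most one, concluding $T(r,\phi)=O(r)=S(r,\mathcal L)$ from $T(r,\mathcal L)\sim\frac{d_{\mathcal L}}{\pi}r\log r$, and then killing $\phi$ at the $c$-points. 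The paper instead runs everything through the auxiliary function $H$ and the weighted-sharing lemmas (\emph{Lemmas \ref{l2.1}}, \emph{\ref{l2.2}}, \emph{\ref{l2.5}}, \emph{\ref{l2.6}}, \emph{\ref{l2.8}}), precisely because that apparatus survives when CM is weakened to finite weight; your quotient argument does not, but under full CM it is shorter and, notably, never uses $c\neq 0$ (only $c\notin S$), which would bear on the paper's own remark that the case $c=0$ of \emph{Theorem C} is open. Two small repairs are needed: the step ``$(n-3)T(r,h)\le S(r,h)$, impossible'' only yields $T(r,h)=O(\log r)$, i.e.\ $h$ rational, and you must add that a non-constant rational $h$ attains each of the $n-1\ge 4$ roots of unity $\eta$, forcing that many poles of $\mathcal L$; and the assertion that $\ol N(r,c;\mathcal L)$ is not $O(r)$ for any finite $c$ is true but not free --- it requires Steuding's $c$-value counting $N(r,c;\mathcal L)=\frac{d_{\mathcal L}}{\pi}r\log r+O(r)$ together with the estimate $N(r,c;\mathcal L)\le 2\ol N(r,c;\mathcal L)+S(r,\mathcal L)$.

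The one genuine gap is the case you dismiss as ``obvious bookkeeping''. By (\ref{e1.1}) and \emph{Lemma \ref{l2.4}}, $S$ may have $l=n-1$ elements, i.e.\ $P$ may have one double zero $\alpha_j$. The hypothesis $E_f(S)=E_{\mathcal L}(S)$ equates the multiplicity of a point $z_0$ as a zero of $f-\alpha$ with its multiplicity as a zero of $\mathcal L-\beta$, where $\alpha,\beta\in S$ need not coincide. At a point where $f$ lands on the double zero $\alpha_j$ while $\mathcal L$ lands on a simple zero $\alpha_i$ (or conversely), one gets $\mathrm{ord}\,P(f)=2\,\mathrm{ord}(f-\alpha_j)=2\,\mathrm{ord}(\mathcal L-\alpha_i)=2\,\mathrm{ord}\,P(\mathcal L)$, so $\phi$ acquires a genuine zero (or pole) there, and nothing in the hypotheses bounds the number of such points: they can be as numerous as $\ol N(r,\alpha_j;f)$, which may be of the order of $T(r,f)$. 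Then $\phi$ is no longer $Re^{\gamma}$ with $R$ rational and the entire small-function argument collapses. This subcase needs a separate idea; it is exactly the situation in which the paper's $H$-function method, with its explicit correction term $\chi^{P}_{n}N(r,\alpha_{j};f)$ in \emph{Lemma \ref{l2.8}}, is built to operate. For $l=n$ (all zeros of $P$ simple) your proof is correct as written, modulo the two repairs above.
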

\begin{theoD}\cite{Yan-Li-Yi_Lith}
	Let $f$ and $\mathcal{L}$ be a defined as in {\text Theorem C}.
	Let $S$ be defined as in (\ref{e1.1}) where $n>2m+4$. If $f$ and $\mathcal{L}$ share $S =\{w: P(w) = 0\}$ CM, then $f = \mathcal{L}$.
\end{theoD}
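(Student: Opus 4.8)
The plan is to translate the hypothesis ``$f$ and $\mathcal{L}$ share $S$ CM'' into ``$P(f)$ and $P(\mathcal{L})$ share the value $0$ CM'' and to study the quotient
\[
\Phi=\frac{P(f)}{P(\mathcal{L})}=\frac{f^{n}+af^{m}+b}{\mathcal{L}^{n}+a\mathcal{L}^{m}+b}.
\]
Since $f$ has finitely many poles and, by axiom (ii), $\mathcal{L}$ has at most one pole (at $s=1$), every zero and pole of $\Phi$ must lie over these finitely many poles; hence $\Phi$ has only finitely many zeros and poles (the single possible multiple zero of $P$, isolated via \emph{Lemma \ref{l2.4}}, causes no essential difficulty). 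The goal is to force $\Phi\equiv1$, i.e. $P(f)\equiv P(\mathcal{L})$, and then to recover $f=\mathcal{L}$ from the algebraic structure of $P$.

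First I would pin down the growth. Applying the second fundamental theorem to $f$ and to $\mathcal{L}$ with the $l$ roots $\alpha_{1},\dots,\alpha_{l}$ of $P$, and using $\overline{N}(r,0;P(f))=\overline{N}(r,0;P(\mathcal{L}))$ together with $l\ge n-1$, yields $T(r,f)=O(T(r,\mathcal{L}))$ and conversely. Thus $f$ is of finite order and $S(r,f)$, $S(r,\mathcal{L})$ are interchangeable, so $\Phi$ is a finite-order function with finitely many zeros and poles and can be written $\Phi=R\,e^{\beta}$ with $R$ rational and $\beta$ a polynomial; comparing orders forces $\deg\beta\le1$. The decisive point is that an $L$-function obeys $T(r,\mathcal{L})\sim(d_{\mathcal{L}}/\pi)\,r\log r$, whereas $T(r,\Phi)=O(r)$, so
\[
T(r,\Phi)=S(r,\mathcal{L})=S(r,f);
\]
in other words $\Phi$ is a \emph{small function} relative to both $f$ and $\mathcal{L}$.

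The heart of the proof is to upgrade ``$\Phi$ small'' to ``$\Phi\equiv1$'', and I expect this to be the main obstacle. From $P(f)=\Phi\,P(\mathcal{L})$ I would exploit the critical structure of $P$: since $P'(w)=w^{m-1}(nw^{n-m}+am)$ has $n-m$ simple nonzero critical points and, by \emph{Lemma \ref{l2.4}}, $P$ has at most one multiple zero, $P$ furnishes roughly $n-m$ distinct simple critical values. Comparing the zeros of $P(f)$ and $P(\mathcal{L})$ with the zeros of $f$, $\mathcal{L}$ (whose multiplicities are forced to be large because the factors $f^{m}$, $\mathcal{L}^{m}$ occur) and differentiating the relation set up a second-main-theorem estimate in which $\Phi$ and its logarithmic derivative enter only through $S(r)$. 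The careful multiplicity bookkeeping is the hard part: done correctly it produces a deficiency inequality that is violated unless $\Phi$ is constant, and the hypothesis $n>2m+4$, i.e. $n-2m>4$, is exactly the surplus needed to make the ramification terms strictly exceed the admissible bound, thereby excluding both a nonconstant $\Phi$ and a constant $\Phi=c\neq1$.

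Once $\Phi\equiv1$ we have $P(f)\equiv P(\mathcal{L})$, that is $f^{n}-\mathcal{L}^{n}+a(f^{m}-\mathcal{L}^{m})=0$. Writing $h=f/\mathcal{L}$ this becomes
\[
\mathcal{L}^{\,n-m}=-a\,\frac{h^{m}-1}{h^{n}-1}.
\]
If $h$ is nonconstant, then, because $\gcd(m,n)=1$, the numerator and denominator share only the factor $h-1$, so after cancellation the right-hand side has $n-1$ simple poles and $m-1$ zeros as a function of $h$; this is incompatible with the left-hand side being an $(n-m)$-th power of the nonconstant $\mathcal{L}$, and a zero/pole count (again using that $n-m$ is large) rules it out. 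If $h$ is constant, then $\mathcal{L}^{\,n-m}$ is constant and $\mathcal{L}$ non-constant forces $h^{m}=h^{n}=1$, whence $\gcd(m,n)=1$ gives $h=1$. In either case $f=\mathcal{L}$, which completes the proof.
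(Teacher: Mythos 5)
Your route is genuinely different from the one the paper uses: Theorem D is obtained there as the case $s\geq 2$ of {\it Theorem \ref{t1.1}} (see {\it Corollary \ref{cor1.2}}), by setting $F=-f^{n}/(af^{m}+b)$, $G=-\mathcal{L}^{n}/(a\mathcal{L}^{m}+b)$ so that sharing $S$ becomes $F,G$ sharing $(1,s)$, killing the case $H\not\equiv 0$ with {\it Lemmas \ref{l2.6}} and {\it \ref{l2.8}}, and settling $H\equiv 0$ by a M\"{o}bius case analysis together with {\it Lemma \ref{l2.10}}. You instead study the quotient $\Phi=P(f)/P(\mathcal{L})$ directly. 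That is a legitimate strategy in principle, but as written your argument has two genuine gaps. The first is foundational: the claim that $\Phi$ has only finitely many zeros and poles does not follow from the hypothesis when $P$ has a multiple zero. The sharing condition $E_{f}(S)=E_{\mathcal{L}}(S)$ matches the multiplicities of $f-\alpha_{i}$ and $\mathcal{L}-\alpha_{k}$ at a common point, not the multiplicities of $P(f)$ and $P(\mathcal{L})$: at a point where $f$ takes the (double) multiple root $\alpha_{j}$ with multiplicity $p$ while $\mathcal{L}$ takes a simple root with multiplicity $p$, $P(f)$ vanishes to order $2p$ but $P(\mathcal{L})$ only to order $p$, so $\Phi$ acquires a zero there, and nothing forces such points to be finite in number. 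Hence the representation $\Phi=R\,e^{\beta}$ with $R$ rational, on which everything after it rests, is not established; your parenthetical ``causes no essential difficulty'' is exactly where an argument is required (this is why {\it Lemmas \ref{l2.6}} and {\it \ref{l2.8}} carry the extra term $\chi^{P}_{n}N(r,\alpha_{j};f)$ and the proofs split into two cases).

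The second gap is the decisive one: the step from ``$\Phi$ is small with respect to $f$ and $\mathcal{L}$'' to ``$\Phi\equiv 1$'' carries the entire content of the theorem, and you do not carry it out. You describe it as ``the main obstacle'' and ``the hard part,'' gesture at a second-main-theorem estimate with unspecified ``multiplicity bookkeeping,'' and assert without any computation that $n>2m+4$ is ``exactly the surplus needed.'' No inequality is produced, and the constant case $\Phi=c\neq 1$ is not separately excluded; note that for this case the natural count (applying the Second Fundamental Theorem to the $\geq n-1$ roots of $w^{n}+aw^{m}+b-cb$ and majorizing by the zeros of $\mathcal{L}^{m}(\mathcal{L}^{n-m}+a)$) only yields $n-3\leq n-m+1$, which is no contradiction for small $m$ — precisely the regime Theorem D covers. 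This is not an accident: the paper's normalization $F=-f^{n}/(af^{m}+b)$ for the set $S$ (as opposed to $-(f^{n}+af^{n-m})/b$ for $S_{1}$) is chosen so that the residual counting functions involve only the $m$ roots of $aw^{m}+b$, and your normalization $P(f)/P(\mathcal{L})$ loses that advantage. Your endgame from $P(f)\equiv P(\mathcal{L})$ to $f=\mathcal{L}$ via $h=f/\mathcal{L}$, the coprimality of $m$ and $n$, and the fact that $\mathcal{L}$ has at most one pole is sound and parallels the paper's final step, but as it stands the proposal is a plan rather than a proof.
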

\par Sahoo-Halder \cite{Hal-Sahoo(19)_Lith} proved a supplementary result corresponding to {\it{Theorems C}}, {\it{D}} for IM sharing. Sahoo-Halder \cite{Hal-Sahoo(19)_Lith} proved the following result.  
\begin{theoE}\cite{Hal-Sahoo(19)_Lith}
	Let $f$ be a meromorphic function having finitely many poles in $\mathbb{C}$ and $\mathcal{L}$ be a non-constant
	$L$-function. 
	 Also let $S$ and $c$ be defined same as in {\it{Theorem C}} and  $n > 4k + 9$ where $k = n - m\geq 1$. 
	  Then if $f$ and $\mathcal{L}$ share $S$ IM and $c$ IM, then $f = \mathcal{L}$.
\end{theoE}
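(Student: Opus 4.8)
The plan is to collapse the set-and-value sharing into a shared-value problem for the composite functions and then run the auxiliary-function/second-main-theorem machinery. First I would set $F=P(f)$ and $G=P(\mathcal L)$. Since $S$ is the zero set of $P$ and $f,\mathcal L$ share $S$ IM, the functions $F$ and $G$ share $0$ IM. Because $f$ has finitely many poles and an $L$-function has at most the single pole $s=1$, both $N(r,f)$ and $N(r,\mathcal L)$ are $O(\log r)$; together with the growth estimate $T(r,\mathcal L)=\frac{d_{\mathcal L}}{\pi}r\log r+O(r)$ this shows that $\mathcal L$, and hence $f$, has order one and that all pole terms are absorbed into $S(r,\cdot)$. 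By Lemma \ref{l2.4}, $P$ has at most one multiple zero, so over the shared set the zeros of $F$ and $G$ are simple except over one point, whose contribution is a single $\ol N$-term.

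Next I would introduce the auxiliary function $H=\big(\tfrac{F''}{F'}-\tfrac{2F'}{F}\big)-\big(\tfrac{G''}{G'}-\tfrac{2G'}{G}\big)$. The key structural fact is that, because $F$ and $G$ share $0$ IM, their simple common zeros are not poles of $H$; hence $N(r,H)$ is controlled entirely by the finitely many poles, the at most one multiple zero of $P$, and the zeros of $F'$, $G'$ lying off the shared set, i.e.\ by the ramification of $P$ at its critical points. Applying the second main theorem to $f$ and to $\mathcal L$ against the targets in $S$, the value $c$, and $\infty$, and inserting these $\ol N$-estimates together with the lemma on the logarithmic derivative, produces a two-sided inequality between $T(r,F)$ and $T(r,G)$. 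Since the critical points of $P=w^n+aw^m+b$ are governed by $k=n-m$ (as $P'(w)=w^{m-1}(nw^{k}+am)$), the failure of IM-sharing to detect multiplicities costs a number of $\ol N$-terms linear in $k$; balancing these against the growth $T(r,F)\sim nT(r,f)$ is exactly what the hypothesis $n>4k+9$ makes possible, and it forces $H\equiv 0$.

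Then I would integrate $H\equiv 0$ to the bilinear relation $\tfrac1F=\tfrac{C}{G}+D$ with constants $C\ne0$, $D$, that is $F=\tfrac{G}{DG+C}$, and dispose of the degenerate cases. If $D\ne0$, then $F$ has a pole wherever $G=-C/D$, so $\mathcal L$ must be a root of $w^n+aw^m+(b+C/D)$ at only finitely many points; this forces $\mathcal L$ to take each of the $\ge n-1\ge4$ distinct roots of that polynomial only finitely often, contradicting the fact that the second main theorem permits at most two exceptional values. Hence $D=0$ and $F=G/C$. To identify $C$ I would use the $c$ IM sharing: since an $L$-function is transcendental with $\mathcal L(\sigma)\to1$ and only one pole, its transcendence and normalization force $c$ to be attained, so a common point with $f=\mathcal L=c$ exists; evaluating $F=G/C$ there gives $P(c)=P(c)/C$, and as $c\notin S\cup\{0\}$ we have $P(c)\ne0$, whence $C=1$ and $F\equiv G$, i.e.\ $P(f)\equiv P(\mathcal L)$. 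I expect this paragraph to be the main obstacle: excluding the bilinear case and, above all, pinning $C=1$ cannot be read off from the pole data alone and genuinely requires the analytic nature of the $L$-function to guarantee that $c$ is attained — precisely the point at which a careless argument breaks down.

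Finally, from $P(f)\equiv P(\mathcal L)$ I would factor $f^n-\mathcal L^n+a(f^m-\mathcal L^m)=0$; assuming $f\not\equiv\mathcal L$ and writing $u=f/\mathcal L$ (necessarily non-constant, else $\mathcal L$ would be constant) this becomes $\mathcal L^{k}(u^n-1)+a(u^m-1)\equiv0$. At any point where $u$ equals a non-trivial $n$-th root of unity $\omega$, the first term vanishes while $a(\omega^m-1)\ne0$, because $\gcd(n,m)=1$ forces $\omega^m\ne1$; hence such a point must be a pole of $\mathcal L$, of which there is at most one. Thus $u$ takes each of the $n-1\ge4$ non-trivial $n$-th roots of unity only finitely often, once more contradicting the two-exceptional-value bound via the second main theorem. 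Therefore $f\equiv\mathcal L$, as required.
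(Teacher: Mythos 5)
You should first note that the paper contains no proof of Theorem E: the statement is quoted from Sahoo--Halder \cite{Hal-Sahoo(19)_Lith} precisely in order to be refuted. Remark 1.2 shows that the hypothesis $n>4k+9$ with $k=n-m\geq 1$ is unsatisfiable unless $\frac{3n+9}{4}<m<n$, and that the original proof rests on the false claim that $f$ and $\mathcal{L}$ sharing $c$ IM forces $P(f)$ and $P(\mathcal{L})$ to share $P(c)$ IM; the authors then replace Theorem E by Theorem 1.3, where $c$ is restricted to $0$ or to a root of $nz^{k}+ma$. Your proposal commendably avoids the Sahoo--Halder error (you invoke the $c$-sharing only to evaluate $F=G/C$ at a single common $c$-point), but it misses the structural point. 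In the only range where the hypothesis can hold, $P(w)=w^{n}+aw^{n-k}+b$ is exactly the polynomial $P_{1}$ of (1.2) with parameter $k$ (and $\gcd(n,k)=\gcd(n,m)=1$), and since $n>4k+9$ implies $n>2k+10$, Theorem 1.2(iii) of the paper already gives $f=\mathcal{L}$ from the IM sharing of $S$ alone; the value $c$ is superfluous. Your own accounting never reaches this: the step ``$n>4k+9$ is exactly what makes the balancing possible, and it forces $H\equiv 0$'' is asserted without a single inequality, and with your normalization $F=P(f)$, $G=P(\mathcal{L})$ sharing $0$ the bookkeeping is genuinely delicate --- the crude bound $N_2(r,0;F)\leq 2nT(r,f)$ is linear in $n$, not in $k$, and one only gets a cost linear in $k$ by working with $F=-(f^{n}+af^{n-k})/b$ (so that $\ol N(r,0;F)\leq \ol N(r,0;f)+\sum_{i=1}^{k}\ol N(r,\delta_{i};f)$ while $\ol N(r,\infty;F)=O(\log r)$), which is exactly what Lemmas 2.7 and 2.9 of the paper are for. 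This is the technical heart of the theorem and it is missing.

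The second genuine gap is the identification $C=1$. You need a point where $\mathcal{L}=c$, justified only by ``transcendence and normalization force $c$ to be attained.'' A transcendental meromorphic function of order one with finitely many poles can omit one finite value (e.g.\ $e^{s}+c$), so the existence of a $c$-point of an arbitrary Selberg-class $\mathcal{L}$ is a nontrivial value-distribution input that must be proved or cited; you flag this as the weak point and then assume it. This difficulty is precisely why the paper does not rectify Theorem E for arbitrary $c\notin S\cup\{0\}$ and instead pins the constant by a second-main-theorem argument that needs no $c$ at all (end of the proof of Theorem 1.2: if $A\neq 1$ the perturbed polynomial $w^{n}+aw^{n-m}+b-b/A$ has at least $n-1$ distinct roots, forcing too many nearly-omitted values of $\mathcal{L}$). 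Two smaller slips in the same spirit: in your case $D\neq 0$ the polynomial $w^{n}+aw^{m}+(b+C/D)$ has only $k+1$ distinct roots when $b+C/D=0$, not $n-1$; and in the final step $u=f/\mathcal{L}$ could a priori be a non-constant rational function, for which ``takes each root of unity finitely often'' contradicts nothing in the second main theorem --- one must instead count the resulting poles of $\mathcal{L}$ directly. Both are repairable, but neither is addressed.
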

\begin{theoF}\cite{Hal-Sahoo(19)_Lith}
		Let $f$ be a meromorphic function having finitely many poles in $\mathbb{C}$ and $\mathcal{L}$ be a non-constant $L$-function. Let $S$ be given as in (\ref{e1.1}), where $n>\max\{2m+4,\; 4k+9\}$ such that $k = n - m\geq 1$. If $f$ and $\mathcal{L}$ share $S =\{w: P(w) = 0\}$ IM, then $f = \mathcal{L}$.
\end{theoF}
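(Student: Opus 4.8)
The plan is to recast the set-sharing hypothesis as a value-sharing one. Put $F=P(f)$ and $G=P(\mathcal{L})$; then $f$ and $\mathcal{L}$ sharing $S$ IM is precisely the statement that $F$ and $G$ share $0$ IM. Two structural facts will do the heavy lifting. First, since $f$ has finitely many poles and $\mathcal{L}$ has at most one pole (at $s=1$, by axiom (ii)), both $N(r,\infty;f)$ and $N(r,\infty;\mathcal{L})$ are $S(r)$, so the poles of $F$ and $G$ are negligible; I also record $T(r,F)=nT(r,f)+S(r,f)$ and the analogue for $G$. Second, by Lemma \ref{l2.4} the polynomial $P$ has at most one multiple zero, so $l\ge n-1$ distinct targets $\alpha_1,\dots,\alpha_l$ are available and, away from one exceptional value, the zeros of $F$ and $G$ are simple.

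I would then run the standard ignoring-multiplicity machinery through the auxiliary function
\[ H=\left(\frac{F''}{F'}-\frac{2F'}{F}\right)-\left(\frac{G''}{G'}-\frac{2G'}{G}\right). \]
A local expansion shows that $H$ vanishes at every common simple zero of $F$ and $G$, while its poles are simple and confined to the poles of $F,G$, the multiple zeros, and the zeros of $F'$ or $G'$ off the zero sets of $F,G$. If $H\not\equiv0$, then $N(r,0;H)\le T(r,H)+O(1)$ controls the simple common zeros, $m(r,H)=S(r)$ by the lemma on the logarithmic derivative, and the pole count is $S(r)$; substituting into the second main theorem for $f$ against the targets $\alpha_1,\dots,\alpha_l,\infty$ yields an inequality of the shape $(n-c_1k-c_2)\,T(r,f)\le S(r,f)$ with explicit small constants $c_1,c_2$, so that a threshold on $n$ is meant to force the coefficient positive and contradict the non-constancy of $f$. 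Hence one expects to be left with $H\equiv0$.

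In the case $H\equiv0$, two integrations give a bilinear relation $\tfrac1F=\tfrac{C}{G}+D$ with $C\neq0$. If $D\neq0$, the finitely many poles of $F$ force $\mathcal{L}$ to take each zero of $P(w)+C/D$ only finitely often; since this polynomial has more than two distinct zeros (its multiple zeros sit only at the boundedly many critical values of $P$, by the analysis behind Lemma \ref{l2.4}), the second main theorem for $\mathcal{L}$ is violated once $n$ is large, so $D=0$ and $CF=G$, i.e.\ $P(\mathcal{L})=C\,P(f)$. Ruling out $C\neq1$ (this is where the $P$-structure and a bound like $n>2m+4$, as in Theorem D, enter) gives $P(f)=P(\mathcal{L})$. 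Writing $h=f/\mathcal{L}$ reduces the latter to $\mathcal{L}^{k}(h^{n}-1)=-a(h^{m}-1)$; if $h$ were nonconstant the finitely many poles of $\mathcal{L}$ would force $h$ to omit the $n-1$ nontrivial $n$-th roots of unity except finitely often, contradicting the second main theorem, so $h$ is constant, and then $h^{n}=h^{m}=1$ together with $\gcd(n,m)=1$ gives $h=1$, that is $f=\mathcal{L}$.

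The main obstacle, I expect, is not any single estimate but the arithmetic of the two thresholds. The $H\equiv0$ branch needs the small-exponent bound $n>2m+4$, while the $H\not\equiv0$ branch is stated with $n>4k+9=4(n-m)+9$, which rearranges to $m>\tfrac{3n+9}{4}$; together these demand $\tfrac{3n+9}{4}<m<\tfrac{n-4}{2}$, a range that is empty for every $n$. In other words, the two \emph{small exponents} $m$ and $k=n-m$ satisfy $m+k=n$, so they cannot both be small, and the hypothesis $n>\max\{2m+4,\,4k+9\}$ is unsatisfiable as written. I therefore anticipate that the statement cannot be proved verbatim and that the real task is to repair the numerology---most plausibly by aligning the exponent used in the IM count with the one appearing in the set $S$, so that both constraints involve the same small exponent---after which the scheme above should go through.
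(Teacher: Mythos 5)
Your final paragraph is the heart of the matter, and it is exactly right: the paper does not prove Theorem F at all --- the statement is quoted from Sahoo--Halder only to be refuted in Remark 1.3, by precisely the arithmetic you carry out. From $n>4k+9$ with $k=n-m$ one gets $4m>3n+9$, while $n>2m+4$ gives $4m<2n-8$; together these force $3n+9<2n-8$, which is impossible, so the hypothesis $n>\max\{2m+4,\,4k+9\}$ is satisfied by no pair $(n,m)$ and the theorem is contentless. Your diagnosis that any repair must make both thresholds depend on the same small exponent is also what the paper implements: its corrected version (Theorem 1.1 with $s=0$) replaces the condition by $n>2m+10$, and the machinery of that proof --- the auxiliary function $H$, a second-main-theorem estimate in the case $H\not\equiv0$ (Lemmas 2.6 and 2.8), and the linear-relation endgame with $h=f/\mathcal{L}$ when $H\equiv0$ --- is essentially the scheme you sketch, up to the normalization $F=-f^{n}/(af^{m}+b)$ (so that $F$ and $G$ share the value $1$) in place of $F=P(f)$ sharing $0$. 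The one quibble is terminological: a statement with an unsatisfiable hypothesis is vacuously true rather than ``unprovable,'' so the correct charge against Theorem F is that it asserts nothing, not that it fails; this is also the substance of the paper's complaint.
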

We now observe some severe errors in \cite{Hal-Sahoo(19)_Lith} as follows:
\begin{rem}
In \cite{Hal-Sahoo(19)_Lith} Sahoo-Halder gave a restriction on choice
 of $m$, whereas if we consider the polynomial for $m=1,2,\ldots$ then putting $m=1,2$ $(k=n-1,n-2)$ we get $n>4n+5$ and $n>4n+1$ respectively, which is a contradiction. So in general the theorem cease to hold for any arbitrary value of $m$ (or $k$). A careful observation shows that $n > 4k + 9$ holds only when $\frac{3n+9}{4}<m<n$ $(1\leq k\leq\frac{n-10}{4})$. 

 
Next in the proof of {\it{Theorem E}}  in \cite{Hal-Sahoo(19)_Lith} [p.10, l.10 from bottom] the authors concluded that if $f$ and $\mathcal{L}$ share $c$ IM then $P(f)$ and $P(\mathcal{L})$ also share $P(c)$ IM. With the help of this argument they finally set up $P(f)=P(\mathcal{L})$ and proved the rest part of the theorem. But this conclusion is true only when (i)$P(f)-P(c)=K_{1}(f-c)^{n}$ and $P(\mathcal{L})-P(c)=K_{2}(\mathcal{L}-c)^{n}$,  for some constant $K_{1},K_{2}$, or  (ii)$P(f)=P(\mathcal{L})$. In the proof of {\it{Theorem E}} clearly both the arguments (i) and  (ii) fail and so in general $P(f)$, $P(\mathcal{L})$ not suppose to share $P(c)$ for any arbitrary $f$ and $\mathcal{L}$. In other words the proof of {\it{Theorem E}} in \cite{Hal-Sahoo(19)_Lith} is not correct and so there is a gap in the proof of {\it{Theorem E}}.\end{rem}
\begin{rem}According to the {\it{Theorem F}}  \cite{Hal-Sahoo(19)_Lith} we have, if $f$ and $\mathcal{L}$ share $S =\{w: P(w) = 0\}$ IM, then for  $n>\max\{2m+4, 4k+9\}$ one can get $f = \mathcal{L}$, where $n,m,k$ and $f,\mathcal{L}$ are mentioned in {\it{Theorem F}}. If possible let us assume for some $m$ {\it{Theorem F}} holds. Then obviously from given condition we have  $$ n > 4k+9\implies n>4n-4m+9\implies 4m>3n+9$$ and $n>2m+4$, both together implies $4m>6m+21$, which is absurd. So there exist no such $m$ for which  {\it{Theorem F}} is true. Hence validity of {\it{Theorem F}} is also at stake. 
\end{rem}
In view of {\it Remarks 1.2} and {\it 1.3} we see that the very existence of the whole paper \cite{Hal-Sahoo(19)_Lith} is at stake.
 \par In this paper we have improved {\it{Theorem D}} by relaxing the nature of sharing the set with the notion of weighted sharing. We have also presented and proved the corrected form of {\it Theorems E} and {\it F} on the uniqueness of $L$-function and meromorphic functions.
Thus, Sahoo-Halder's  \cite{Hal-Sahoo(19)_Lith} results have been fully rectified. 
 \par Before presenting the main results we invoke the definition of weighted sharing.
 \begin{defi}\cite{Lahiri_Complex.Var(01)}
 	Let  $k$ be a non-negative integer or infinity. For $a\in\mathbb{C}\cup\{\infty\}$ we denote by $E_{k}(a;f)$ the set of all $a$-points of $f$, where an $a$-points of $f$ of multiplicity $m\;(\leq k)$ counted $m$ times and $m(> k)$ then it counted $k+1$ times. If  $E_{k}(a;f)= E_{k}(a;g)$, we say that $f,\;g$ share the value $a$ with weight $k$.
 	\par We write $f,\;g$ share $(a,k)$ to mean that $f,\;g$ share the value $a$ with weight $k$. Clearly if $f,\;g$ share $(a,k)$ then $f,\;g$ share $(a,p)$ for any integer $p$, $0\leq p < k$. Also note $f,\;g$ share a value $a$ IM or CM $f,\;g$ share $(a,0)$ or $(a,\infty)$ respectively.   
 \end{defi}
 \begin{defi}\cite{Lahiri_Archivum}
 	For $ S\subset \mathbb{C}\cup\{\infty\}$, we define $E_{f}(S,k)=\cup_{a\in S}E_{k}(a;f)$, where $k$ is a non-negative integer or infinity. Clearly $E_{f}(S)=E_{f}(S,\infty)$.\par In particular $E_{f}(S,k)=E_{g}(S,k)$ and $E_{f}(\{a\},k)=E_{g}(\{a\},k)$ implies $f$ and $g$ share the set $S$ and the value $a$ with weight $k$ .
 	 
 \end{defi}
  We first present the following theorem corresponding to {\it Theorem D} which provide the corrected form of {\it Theorem F} as well.

\begin{theo}\label{t1.1}
 Let $S$ be defined as in (\ref{e1.1}). Also let $f$ be a meromorphic function having finitely many poles in $\mathbb{C}$ and $\mathcal{L}$ be a non-constant $L$-function such that $E_{f}(S,s)=E_{\mathcal{L}}(S,s)$. If
\\(i) $s\geq 2$ and $n>2m+4$, or if
\\(ii) $s=1$ and $n>2m+5$, or if
\\(iii) $s=0$ and $n>2m+10$;
 then $f = \mathcal{L}$.
\end{theo}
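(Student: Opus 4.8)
The plan is to recast the set-sharing hypothesis as weighted value-sharing of two auxiliary functions and then run the Lahiri-type machinery. First I would set $F=P(f)$ and $G=P(\mathcal{L})$, so that $E_{f}(S,s)=E_{\mathcal{L}}(S,s)$ says precisely that $F$ and $G$ share $(0,s)$. Because $f$ has only finitely many poles and $\mathcal{L}$, being an $L$-function, has at most the single pole at $s=1$, both $\ol{N}(r,\infty;F)$ and $\ol{N}(r,\infty;G)$ are $S(r)$, while the Valiron--Mokhon'ko lemma gives $T(r,F)=n\,T(r,f)+S(r)$ and $T(r,G)=n\,T(r,\mathcal{L})+S(r)$. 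I would also record the shape of the critical points of $P$: since $P'(w)=w^{m-1}\!\left(nw^{n-m}+am\right)$, the zeros of $F'$ lying off the zeros of $F$ can sit only over $f=0$ and over the $n-m$ roots $\gamma_{1},\dots,\gamma_{n-m}$ of $nw^{n-m}+am$, and by \emph{Lemma \ref{l2.4}} at most one of the latter is a (necessarily double) zero of $P$, so that $n-1\le l\le n$.

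Next I would introduce the standard auxiliary function
\[
H=\left(\frac{F''}{F'}-\frac{2F'}{F}\right)-\left(\frac{G''}{G'}-\frac{2G'}{G}\right)
\]
and argue by the dichotomy $H\not\equiv0$ versus $H\equiv0$. The case $H\not\equiv0$ is the technical heart and the source of the three numerical thresholds. A direct Laurent expansion shows that every common simple zero of $F$ and $G$ is a zero of $H$, so together with $m(r,H)=S(r)$ one obtains
\[
\ol{N}(r,0;F\mid=1)\le N(r,\infty;H)+S(r)\le \ol{N}_{*}(r,0;F,G)+\ol{N}_{0}(r,0;F')+\ol{N}_{0}(r,0;G')+S(r).
\]
On the other hand the second main theorem applied to $f$ against the $l$ targets $\alpha_{1},\dots,\alpha_{l}$ (the poles being negligible) gives the lower bound $(l-1)T(r,f)\le\ol{N}(r,0;F)+S(r)$, and likewise for $\mathcal{L}$. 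Feeding the displayed upper bound, the derivative counts coming from the factor $w^{m-1}$ and the critical points $\gamma_{j}$, and the weight estimate that controls $\ol{N}_{*}(r,0;F,G)$ by a factor $1/(s+1)$, into these lower bounds produces, after arithmetic, an inequality of the form $n\le 2m+c(s)$ in which $c(s)$ increases as the weight drops; this is exactly what contradicts $n>2m+4$, $n>2m+5$, $n>2m+10$ for $s\ge2$, $s=1$, $s=0$ respectively. I expect the careful bookkeeping of these counting functions --- in particular tracking how the critical points of $P$ and the weight $s$ feed into $\ol{N}_{*}$ and $\ol{N}_{0}$ --- to be the main obstacle. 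Hence $H\equiv0$.

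Once $H\equiv0$ I would integrate twice to reach the M\"obius relation $\dfrac{1}{F}=\dfrac{A}{G}+C$ with constants $A\neq0$ and $C$. If $C\neq0$ then $F=\dfrac{G}{CG+A}$ would acquire a pole at every solution of $P(\mathcal{L})=-A/C$; since $\mathcal{L}$ is transcendental this value equation has infinitely many solutions, forcing $F=P(f)$ to have infinitely many poles and contradicting the finiteness of the poles of $f$. Therefore $C=0$ and $G=AF$, that is, $P(\mathcal{L})=A\,P(f)$. Finally I would invoke the rigidity of $P$: because $\gcd(n,m)=1$, the coprimality forces the critical values $P(0)$ and the $P(\gamma_{j})$ to be pairwise distinct, so that $P$ is a strong uniqueness polynomial under the standing degree hypothesis, whence $A=1$ and $f=\mathcal{L}$. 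This last step is where the assumption $\gcd(n,m)=1$ is essential; the remainder is routine once the estimate in the case $H\not\equiv0$ has been pushed through.
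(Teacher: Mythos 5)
Your reduction to weighted value sharing and the dichotomy on $H$ are the right general strategy, and the $H\not\equiv 0$ half, though you leave the arithmetic unchecked, runs parallel to the paper's (its Lemmas \ref{l2.6} and \ref{l2.8} and the resulting estimate $\tfrac{n}{2}T(r)\leq (2+m)T(r)+\tfrac{3/2-s}{s+1}(\ol N(r,0;f)+\ol N(r,0;\mathcal{L}))+O(\log r)$). The genuine problem is your choice of auxiliary functions $F=P(f)$, $G=P(\mathcal{L})$ and the endgame it forces. After $H\equiv 0$ you arrive at $P(\mathcal{L})=A\,P(f)$ and assert that critical injectivity makes $P$ a \emph{strong} uniqueness polynomial, ``whence $A=1$''. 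That assertion is precisely what needs proof, and the natural argument fails here: writing $Q(w)=w^{n}+aw^{m}+(1-A)b$ with $A\neq 1$, the relation becomes $Q(\mathcal{L})=A f^{m}(f^{n-m}+a)$, whose right-hand side vanishes only where $f$ takes one of the $n-m+1$ values $0,\eta_{1},\dots,\eta_{n-m}$ (roots of $w^{n-m}+a$), while $Q$ has at least $n-1$ distinct roots. The second main theorem then yields only $(n-2)T(r,\mathcal{L})\leq (n-m+1)T(r,f)+O(\log r)$, i.e.\ $m\leq 3$ --- no contradiction for $m=1,2,3$, which are exactly the values for which $n>2m+4$ is easiest to satisfy. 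Critical injectivity gives the uniqueness-polynomial property ($P(f)=P(g)\Rightarrow f=g$ under suitable degree conditions), but it does not by itself remove the multiplicative constant $A$.

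This is why the paper does \emph{not} take $F=P(f)$ for this theorem: it takes $F=-f^{n}/(af^{m}+b)$ and $G=-\mathcal{L}^{n}/(a\mathcal{L}^{m}+b)$, sharing $(1,s)$. Then $H\equiv 0$ gives a M\"obius relation $F=(AG+B)/(CG+D)$, every degenerate case of which is killed by the second main theorem applied to $F$ or $G$ at the values $0,1,\infty$ (whose preimages are controlled by $\ol N(r,0;f)$, the shared set, and $\ol N(r,\infty;f)+\ol N(r,0;af^{m}+b)$, of total weight at most $2m+1$ against $nT(r,f)$), together with Lemma \ref{l2.10} to exclude $FG=1$; one lands on $F=G$ directly, with no constant to eliminate, and finishes via $h=\mathcal{L}/f$ and the scarcity of poles of $\mathcal{L}$. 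Compare the paper's Theorem \ref{t1.2}, where the polynomial normalization \emph{is} used --- but for $P_{1}(w)=w^{n}+aw^{n-m}+b$, whose complementary factor $f^{n-m}(f^{m}+a)$ has only $m+1$ zero-values, so the same count gives $n\leq m+3$ and a genuine contradiction. A smaller point: in your $C\neq 0$ case, ``$\mathcal{L}$ is transcendental, hence $P(\mathcal{L})=-A/C$ has infinitely many solutions'' needs a word about deficient values (the equation reduces to $\mathcal{L}\in\{q_{1},\dots,q_{l'}\}$ with $l'\geq 3$ and $\sum_{c}\delta(c,\mathcal{L})\leq 2$, so some $q_{i}$ is taken infinitely often); that fix is routine, unlike the $A=1$ step.
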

The following corollary is obvious from the above theorem which also relax the CM sharing of $S$ in {\it{Theorem D}} to weight $2$.
\begin{cor}\label{cor1.2}
	Let $S$ be defined as in (\ref{e1.1}) and $n > 2m+4$. Also let
	$f$ be a meromorphic function having finitely many poles in $\mathbb{C}$ and let ${\mathcal{L} } $ be a non-constant $L$-function. If $E_{f}(S,2)=E_{\mathcal{L}}(S,2)$, then $f= \mathcal{L}$.
\end{cor}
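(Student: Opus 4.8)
The plan is to convert the set-sharing hypothesis into a value-sharing statement and then deploy the weighted-sharing machinery built around a single auxiliary function. First I would set $F=P(f)$ and $G=P(\mathcal L)$, so that the assumption $E_f(S,s)=E_{\mathcal L}(S,s)$ says precisely that $F$ and $G$ share $(0,s)$, the lone possible multiple zero of $P$ (which by \emph{Lemma~2.4} is at most one) being accounted for separately. Two structural facts will do the heavy lifting throughout: an $L$-function is transcendental of order one with $T(r,\mathcal L)=\tfrac{d_{\mathcal L}}{\pi}r\log r+O(r)$ and has its only pole at $s=1$, so that $\overline N(r,\mathcal L)=O(\log r)=S(r,\mathcal L)$; and since $f$ has only finitely many poles, $\overline N(r,\infty;f)=O(\log r)$ as well. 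Thus all pole-counting terms for $F$ and $G$ are negligible, which is exactly what makes the small thresholds on $n$ attainable.

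The core object is
\[
\mathcal H=\left(\frac{F''}{F'}-\frac{2F'}{F}\right)-\left(\frac{G''}{G'}-\frac{2G'}{G}\right),
\]
and I would dichotomize on whether $\mathcal H\equiv 0$. If $\mathcal H\not\equiv 0$, the weighted sharing $(0,s)$ forces every pole of $\mathcal H$ to be simple and confined to a thin set of points — the multiple zeros of $F$ and $G$, the zeros of $F'$ and $G'$, and the finitely many poles of $f$ and $\mathcal L$ — so $N(r,\mathcal H)$ admits a sharp bound. Feeding this bound into the second main theorem for $F$ and $G$ and simplifying with $T(r,F)=nT(r,f)+S(r,f)$ (and likewise for $G$) produces an inequality of the form $n\,T(r)\le(2m+c_s)\,T(r)+S(r)$, where the loss constant $c_s$ worsens as the weight drops, giving $c_s=4,5,10$ for $s=2,1,0$ respectively. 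These are exactly the numerical thresholds in the statement, and for $n$ above the stated bound the inequality is untenable; hence $\mathcal H\equiv 0$.

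When $\mathcal H\equiv 0$, integrating twice gives $\tfrac1F=\tfrac{C}{G}+D$ for constants $C\neq 0$ and $D$, equivalently $G=\dfrac{CF}{1-DF}$. A short case analysis then collapses this to $F=G$. If $D\neq 0$, the relation forces $G=P(\mathcal L)$ to attain the finite value $-C/D$ only where $F=\infty$, i.e.\ only at the finitely many poles of $f$; this would require $\mathcal L$ to omit every root of $P(w)+C/D$, which is impossible for a transcendental $\mathcal L$ by Picard (its multiplicities being bounded via \emph{Lemma~2.4}). In the remaining case $D=0$ we have $G=CF$, and the pole structure together with the limit $\mathcal L(s)\to 1$ as $\mathrm{Re}\,s\to+\infty$ pins $C=1$. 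Either way $P(f)=P(\mathcal L)$.

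Finally I would pass from $P(f)=P(\mathcal L)$ to $f=\mathcal L$. Writing $f^n-\mathcal L^n=-a\,(f^m-\mathcal L^m)$ and using $\gcd(n,m)=1$ together with the structural content of \emph{Lemma~2.4}, I would show that a nonconstant ratio $f/\mathcal L$ is incompatible with these degree and multiplicity constraints, leaving $f\equiv\mathcal L$. The step I expect to be most delicate is the $\mathcal H\not\equiv 0$ pole-count: extracting the sharp loss constants — in particular the jump to $c_s=10$ for the purely IM case $s=0$ — demands a careful separation of shared simple zeros from the multiple and unshared ones and full use of the negligibility of the pole terms, since any slack there would inflate the bound on $n$ and weaken the theorem.
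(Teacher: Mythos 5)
Your overall architecture (dichotomy on an auxiliary function $H$, second--main--theorem estimates when $H\not\equiv 0$, a M\"{o}bius relation when $H\equiv 0$, and a final root-of-unity argument for $h=f/\mathcal{L}$) is the same as the paper's, but your choice of normalization $F=P(f)$, $G=P(\mathcal{L})$ sharing $(0,s)$ creates two genuine gaps. The paper instead takes $F=-f^{n}/(af^{m}+b)$ and $G=-\mathcal{L}^{n}/(a\mathcal{L}^{m}+b)$ sharing $(1,s)$, and this is not a cosmetic difference: in the $H\not\equiv 0$ case the second main theorem is applied to $F$ at the three values $0,1,\infty$, where $\overline{N}(r,0;F)=\overline{N}(r,0;f)\leq T(r,f)$ and $\overline{N}(r,\infty;F)\leq \overline{N}(r,\infty;f)+\sum_{i=1}^{m}\overline{N}(r,\gamma_{i};f)$ with $\gamma_{i}$ the $m$ roots of $aw^{m}+b$; this is exactly where the coefficient $2+m$ in Lemma 2.6, and hence the threshold $n>2m+4$, comes from. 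With $F=P(f)$ the shared value is $0$, the poles of $F$ are negligible, and there is no third distinguished value for $F$ at which to apply the second main theorem; the natural substitute (running the second main theorem on $f$ over the critical values of $P$) produces constants governed by the $n-m$ simple zeros of $P'(w)=w^{m-1}(nw^{n-m}+ma)$, i.e.\ conditions in terms of $k=n-m$ rather than $m$ (this is the regime of Theorem 1.2 for the polynomial $P_{1}$, not of the present corollary). So your assertion that the loss constants come out to $2m+4,\,2m+5,\,2m+10$ is not derived and is unlikely to be what your normalization actually yields.

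The second gap is in the case $H\equiv 0$, $D=0$, i.e.\ $P(\mathcal{L})=C\,P(f)$. Your claim that the limit $\mathcal{L}(s)\to 1$ as $\mathrm{Re}\,s\to+\infty$ ``pins $C=1$'' requires knowing the asymptotic behaviour of $P(f(s))$ along that ray, and nothing in the hypotheses controls $f$ at infinity, so this step is not valid. The paper's replacement is a second-main-theorem count: if $C\neq 1$ then $\mathcal{L}^{n}+a\mathcal{L}^{m}+b(1-C)=C f^{m}(f^{n-m}+a)$, and one counts the $\geq n-1$ distinct roots of $w^{n}+aw^{m}+b(1-C)$ against the values that $f$ is forced to take. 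But in your normalization those forced values are $0$ together with the $n-m$ roots of $w^{n-m}+a$, so the inequality reads $(n-3)T(r,\mathcal{L})\leq (n-m+1)T(r,f)+S(r)$, i.e.\ $m\leq 4$ --- not a contradiction for small $m$. (The paper's rational normalization leaves only the $m+1$ values coming from $f$ and $aw^{m}+b$, whence $n\leq m+3$, which does contradict $n>2m+4$.) Your disposal of $D\neq 0$ via Picard applied to the $\geq n-1$ roots of $P(w)+C/D$ is fine, and your final step from $P(f)=P(\mathcal{L})$ to $f=\mathcal{L}$ via $\mathcal{L}^{n-m}=-a(h^{m}-1)/(h^{n}-1)$ and $\gcd(n,m)=1$ is essentially the paper's. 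To close the argument you should either switch to the rational normalization throughout, or supply a genuinely different estimate for the two steps above.
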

We note that as far as the set $S$ is concerned, {\it Theorem \ref{t1.1}} is not valid for all $m$ with gcd$(n,m)=1$. For example if we consider $m=n-1$ for $s\geq 2$, {\it Theorem \ref{t1.1}} is not applicable. An elementary calculation will show that when $1\leq m <\frac{n-4}{2}$, for $s\geq 2$, the theorem is valid. Similarly it can be shown that for $s=1$ or $0$ then $m$ will have some restrictions. So it will be interesting to investigate the form of {\it Theorem \ref{t1.1}} for the rest values of $m$ in order to complete the theorem. The following theorem elucidate in this regard.     
\begin{theo}\label{t1.2}
	Let	$S_{1}$ be defined as in (\ref{e1.2}), $f$ be a meromorphic function having finitely many poles in $\mathbb{C}$ and $\mathcal{L}$ be a non-constant $L$-function such that $E_{f}(S_{1},s)=E_{\mathcal{L}}(S_{1},s)$. If
	\\(i) $s\geq 2$ and $n>2m+4$, or if
	\\(ii) $s=1$ and $n>2m+5$, or if
	\\(iii) $s=0$ and $n>2m+10$;
then $f = \mathcal{L}$.
\end{theo}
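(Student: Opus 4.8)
The plan is to carry out the argument in exact parallel with the proof of \emph{Theorem \ref{t1.1}}, replacing the polynomial $P$ and the set $S$ by $P_1$ and $S_1$ throughout; this is legitimate because, by \emph{Lemma \ref{l2.4}}, $P_1(w)=w^{n}+aw^{n-m}+b$ shares with $P(w)$ the crucial feature of possessing at most one multiple zero, and because the condition $\gcd(n,m)=1$ is symmetric in the two constructions. First I would recast the hypothesis: setting $\phi(w)=-\tfrac{1}{b}\left(w^{n}+aw^{n-m}\right)=-\tfrac{1}{b}\,w^{n-m}\left(w^{m}+a\right)$, the elements of $S_1$ are exactly the solutions of $\phi(w)=1$, so $E_f(S_1,s)=E_{\mathcal{L}}(S_1,s)$ is equivalent to saying that $F:=\phi(f)$ and $G:=\phi(\mathcal{L})$ share $(1,s)$. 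Since $f$ has only finitely many poles and $\mathcal{L}$ is, by axiom (ii), holomorphic except for a single pole at $s=1$, both $F$ and $G$ have only finitely many poles; hence $\overline{N}(r,F)+\overline{N}(r,G)=O(\log r)$, which is absorbed into the error terms $S(r,f)$ and $S(r,\mathcal{L})$.

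Next I would introduce the standard auxiliary function $H=\left(\dfrac{F''}{F'}-\dfrac{2F'}{F-1}\right)-\left(\dfrac{G''}{G'}-\dfrac{2G'}{G-1}\right)$ and split the analysis according to whether $H\equiv 0$. If $H\not\equiv 0$, the points counted in $E_f(S_1,s)=E_{\mathcal{L}}(S_1,s)$ are, up to a bounded contribution, zeros of the numerator of $H$, and feeding the weighted-sharing counting estimates into the Second Main Theorem produces an inequality of the shape $\Lambda(n,m,s)\,\{T(r,f)+T(r,\mathcal{L})\}\le S(r,f)+S(r,\mathcal{L})$. It is precisely here that the three thresholds $n>2m+4$, $n>2m+5$ and $n>2m+10$ (for $s\ge 2$, $s=1$ and $s=0$ respectively) render the coefficient $\Lambda(n,m,s)$ positive, forcing $T(r,f)+T(r,\mathcal{L})\le S(r,f)+S(r,\mathcal{L})$, a contradiction; therefore $H\equiv 0$. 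Integrating $H\equiv 0$ twice gives a bilinear relation $\dfrac{1}{F-1}=\dfrac{A\,G+B}{G-1}$ connecting $F$ and $G$.

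I would then exploit the finiteness of the poles of $F$ and $G$ together with the value distribution of the $L$-function to discard all the Möbius configurations except $F\equiv G$ and $F\cdot G\equiv 1$. In the case $F\equiv G$, that is $P_1(f)=P_1(\mathcal{L})$, I would invoke the strong-uniqueness-polynomial property of $P_1$, which follows from $\gcd(n,m)=1$ together with the at-most-one-multiple-zero structure supplied by \emph{Lemma \ref{l2.4}}: writing $h=f/\mathcal{L}$ recasts the identity as $\mathcal{L}^{m}\left(h^{n}-1\right)=-a\left(h^{n-m}-1\right)$; a genuinely non-constant $h$ is excluded by that structural property, and for constant $h$ the non-constancy of $\mathcal{L}$ forces $h^{n}-1=h^{n-m}-1=0$, whence $h^{\gcd(n,n-m)}=h^{\gcd(n,m)}=h=1$, i.e.\ $f=\mathcal{L}$. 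In the case $F\cdot G\equiv 1$, expanding $\phi(f)\,\phi(\mathcal{L})\equiv 1$ shows that $f^{n-m}(f^{m}+a)$ and $\mathcal{L}^{n-m}(\mathcal{L}^{m}+a)$ multiply to the nonzero constant $b^{2}$, so each of $f$ and $\mathcal{L}$ can vanish only at the finitely many poles of the other; thus $\mathcal{L}$ would have finitely many zeros and poles, forcing $\mathcal{L}=R\,e^{g}$ with $R$ rational and $g$ a polynomial, whose power-type growth contradicts the known asymptotic $T(r,\mathcal{L})\sim\tfrac{d_{\mathcal{L}}}{\pi}\,r\log r$ of an $L$-function. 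This rules out $F\cdot G\equiv 1$ and completes the proof.

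The step I expect to be the main obstacle is the elimination of the case $F\cdot G\equiv 1$ (and the stray bilinear sub-cases): whereas $F\equiv G$ is settled purely algebraically through $\gcd(n,m)=1$, excluding $F\cdot G\equiv 1$ genuinely requires the special analytic nature of the $L$-function—its lone pole at $s=1$, its finite order, and its growth rate $T(r,\mathcal{L})\sim\tfrac{d_{\mathcal{L}}}{\pi}\,r\log r$—to be leveraged so as to survive even the weakest threshold $n>2m+10$ occurring at $s=0$, where the counting estimates are tightest.
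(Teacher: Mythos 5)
Your proposal is correct and follows essentially the same route as the paper: the same transforms $F=-(f^{n}+af^{n-m})/b$ and $G=-(\mathcal{L}^{n}+a\mathcal{L}^{n-m})/b$ sharing $(1,s)$, the same auxiliary function $H$ combined with the Second Main Theorem and weighted-sharing counting estimates to force $H\equiv 0$ under the stated thresholds, and the same endgame via $h=f/\mathcal{L}$ and $\gcd(n,m)=1$. The only cosmetic difference is that you organize the post-integration M\"obius analysis around the residual cases $F\equiv G$ and $F\cdot G\equiv 1$ (disposing of the latter by the single-pole and growth properties of $\mathcal{L}$, much as the paper's \emph{Lemma \ref{l2.10}} does in the setting of \emph{Theorem \ref{t1.1}}), whereas the paper directly shows the integration constants satisfy $B=0$ and $A=1$, so that only $F\equiv G$ survives.
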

\begin{cor}
In {\it Theorem \ref{t1.2}} we see that $\frac{n+4}{2}<k=n-m\leq n-1$ and a close look will reveal that this will supplement the values of $m$ for the case $s\geq 2$ in {\it Theorem \ref{t1.1}}. 
\end{cor}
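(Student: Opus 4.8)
The plan is to derive both assertions of the corollary directly from \emph{Theorem \ref{t1.2}}, which I may assume, together with elementary arithmetic on the exponents. First I would record what the hypothesis of \emph{Theorem \ref{t1.2}} says in the case $s\geq 2$, namely $n>2m+4$, and rewrite it in terms of $k=n-m$. Substituting $m=n-k$ into $n>2m+4$ gives $n>2(n-k)+4$, that is $2k>n+4$, so $k>\frac{n+4}{2}$; the bound $k\leq n-1$ is immediate from $m\geq 1$. This establishes the displayed inequality $\frac{n+4}{2}<k=n-m\leq n-1$.

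Next I would make the supplementation claim precise by comparing the two theorems on a common axis, that of the \emph{middle exponent} of the defining polynomial. By the remark following \emph{Corollary \ref{cor1.2}}, for $s\geq 2$ the set $S$ of (\ref{e1.1}), whose polynomial $P(w)=w^{n}+aw^{m}+b$ has middle exponent $m$, is handled by \emph{Theorem \ref{t1.1}} precisely when $1\leq m<\frac{n-4}{2}$. For the set $S_{1}$ of (\ref{e1.2}) the relevant polynomial is $P_{1}(w)=w^{n}+aw^{n-m}+b$, whose middle exponent is $k=n-m$, so by the first paragraph \emph{Theorem \ref{t1.2}} resolves exactly those middle exponents $k\in(\frac{n+4}{2},\,n-1]$. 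Since $\frac{n-4}{2}<\frac{n+4}{2}$, the intervals $[1,\frac{n-4}{2})$ and $(\frac{n+4}{2},\,n-1]$ are disjoint, so the two results together cover a strictly larger family of admissible middle exponents than \emph{Theorem \ref{t1.1}} alone; this is the asserted supplementation. I would also note that the coprimality hypothesis transfers automatically, since $\gcd(n,n-m)=\gcd(n,m)=1$, so passing from $S$ to $S_{1}$ loses no exponent.

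There is no genuine analytic obstacle here, because the uniqueness conclusion $f=\mathcal{L}$ is already supplied by \emph{Theorem \ref{t1.2}}; the only point demanding care is the bookkeeping. The parameter $m$ plays two different roles—it is the literal subscript in the hypothesis $n>2m+4$, yet the exponent actually appearing in $P_{1}$ is $n-m$—so the comparison with \emph{Theorem \ref{t1.1}} is meaningful only after one recasts everything in terms of the single quantity $k=n-m$. Making this identification explicit is precisely what converts the informal phrase ``a close look will reveal'' into the clean statement that \emph{Theorem \ref{t1.2}} fills in the high range of middle exponents left uncovered by \emph{Theorem \ref{t1.1}}.
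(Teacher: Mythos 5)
Your proposal is correct and fills in exactly the computation the paper leaves implicit (the paper states this corollary without proof): rewriting $n>2m+4$ as $k=n-m>\frac{n+4}{2}$ with $k\leq n-1$ from $m\geq 1$, and observing that the middle exponent of $P_{1}$ is $k$, so \emph{Theorem \ref{t1.2}} covers the range $\left(\frac{n+4}{2},\,n-1\right]$ of middle exponents disjoint from the range $\left[1,\frac{n-4}{2}\right)$ handled by \emph{Theorem \ref{t1.1}} for $s\geq 2$. Your additional remark that $\gcd(n,n-m)=\gcd(n,m)$ is a worthwhile detail the paper does not spell out.
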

In {\it{Theorem C}}, $c$ has been considered as non-zero, so it will be interesting to investigate the theorem for $c=0$. In the next theorem, we have rectified {\it{Theorem E}} considering two special form of $c$. However we have not succeeded to get the result for any arbitrary $c$.
\begin{theo}\label{t1.3}
	Let $S$ be defined as in (\ref{e1.1})  and $f$ be a meromorphic function having finitely many poles in $\mathbb{C}$ and let $\mathcal{L}$ be a non-constant
	$L$-function. 
	Suppose $E_{f}(S,s)=E_{\mathcal{L}}(S,s)$ and $E_{f}(\{c\},t)=E_{\mathcal{L}}(\{c\},t)$ for some finite $c\in( 0, a_{1},a_{2},\ldots,a_{n-m})$ but $c\not\in S$, where $a_{i}\;(i=1,2,\ldots,n-m)$ are zeros of $nz^{n-m}+ma$. First suppose \\(I) $c=0$, $t=0$ and  
		\\(i) $s\geq 2$, $n\geq2m+3$ or
	 	\\(ii) $s=1$, $n\geq2m+4$ or 
		\\(iii) $s=0$, $n\geq2m+9$; then  
				we have $f = \mathcal{L}$.\par Next suppose \\(II) $c$ is a root of $nz^{k}+ma=0$, $k=n-m$ and $l=n$. If 
				\\(i) $s\geq 2$, $t=1$ and $n\geq 2k+3$ or 
		\\(ii) $s=1$, $t=0$ and $n\geq 2k+4$ or \\(iii) $s=0$, $t=0$ and $n\geq 2k+7$, then  
						we have $f = \mathcal{L}$. 
\end{theo}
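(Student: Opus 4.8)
The plan is to transfer the hypotheses from $f$ and $\mathcal{L}$ to the composite functions $F = P(f)$ and $G = P(\mathcal{L})$. Since $P$ has $l$ distinct roots of which, by \emph{Lemma~\ref{l2.4}}, at most one is multiple, the set condition $E_{f}(S,s) = E_{\mathcal{L}}(S,s)$ is equivalent (after isolating the contribution of the single possible multiple root) to $F$ and $G$ sharing $(0,s)$. First I would record the pole bookkeeping: $f$ has finitely many poles and $\mathcal{L}$ has its only pole at $s = 1$, so $\ol N(r,f)$, $\ol N(r,\mathcal{L})$, and hence $\ol N(r,F)$, $\ol N(r,G)$, are $O(\log r) = S(r)$; moreover $T(r,F) = nT(r,f) + S(r,f)$ and $T(r,G) = nT(r,\mathcal{L}) + S(r,\mathcal{L})$. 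These identities convert every degree threshold in the statement into a usable counting inequality.

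Next I would introduce the standard auxiliary function
$$H = \left(\frac{F''}{F'} - \frac{2F'}{F}\right) - \left(\frac{G''}{G'} - \frac{2G'}{G}\right),$$
and argue the dichotomy $H \equiv 0$ or $H \not\equiv 0$. If $H \not\equiv 0$, then each simple common zero of $F$ and $G$ is a zero of $H$, so the corresponding reduced counting function is controlled by $T(r,H)$, which in turn is dominated by the (small) pole terms together with the truncated counting functions. Feeding this into the second fundamental theorem applied to $f$ (through the roots of $P$ and the value $c$) and to $\mathcal{L}$, and using the weight-$s$ and weight-$t$ Lahiri-type estimates for the truncated counting functions, I expect to reach an inequality of the shape $\bigl(n - 2m - \varepsilon(s,t)\bigr)\,\bigl(T(r,f)+T(r,\mathcal{L})\bigr) \leq S(r)$. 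The thresholds $n \geq 2m+3$, $n \geq 2m+4$, $n \geq 2m+9$ in part (I), and the analogous $k$-thresholds in part (II), are exactly what is needed to make the left-hand coefficient positive, forcing the contradiction; hence $H \equiv 0$.

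Integrating $H \equiv 0$ twice yields a bilinear relation $\frac{1}{F} = \frac{A}{G} + B$ with $A \neq 0$. I would then split into the subcases $B = 0$ and $B \neq 0$. When $B \neq 0$, the relation forces $G$ (hence $\mathcal{L}$) to omit a value or to carry poles incompatible with $\mathcal{L}$ having a single pole and $f$ finitely many poles; combined with the first fundamental theorem this excludes $B \neq 0$ except possibly for one residual configuration. This is the point where the special choice of $c$ is decisive: because $c$ is a critical point of $P$ (the high-order point $w = 0$ in part (I), a simple critical point $a_{i}$ in part (II)), the value $P(c)$ is a ramified value, so the $c$-points shared by $f$ and $\mathcal{L}$ are forced to be multiple zeros of $F - P(c)$ and $G - P(c)$; reading this multiplicity against the bilinear relation rules out the residual configuration and leaves $A = 1$, $B = 0$, that is $P(f) = P(\mathcal{L})$.

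Finally I would deduce $f = \mathcal{L}$ from $P(f) = P(\mathcal{L})$. Writing $h = f/\mathcal{L}$, the identity gives $\mathcal{L}^{\,n-m}(h^{n} - 1) = -a(h^{m} - 1)$. If $h$ is constant, then $h^{n} = h^{m} = 1$ and $\gcd(n,m) = 1$ force $h = 1$, so $f = \mathcal{L}$. If $h$ were non-constant, then $\mathcal{L}^{\,n-m}$ would be expressed through factors of the form $(h - \zeta)$ with $\zeta$ a root of unity, which together with the finiteness of the poles of $f$ and $\mathcal{L}$ and the shared value $c$ produces a contradiction, so this branch cannot occur. I expect the main obstacle to lie in the $H \not\equiv 0$ counting step and, above all, in the elimination of the residual bilinear subcase: it is here that the hypothesis on $c$ being a ramified value must be used in full, and where the precise balance between the weights $s,t$ and the degree bounds $n \geq 2m+3$ (respectively $n \geq 2k+3$, and so on) is genuinely needed.
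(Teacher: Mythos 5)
Your global skeleton --- the auxiliary function $H$, the dichotomy $H\equiv 0$ versus $H\not\equiv 0$, the second fundamental theorem with truncated counting functions in the first branch, the M\"obius relation and its elimination in the second, and the final reduction via $h=f/\mathcal{L}$ with $\gcd(n,m)=1$ --- coincides with the paper's. But part (I) has a genuine gap. You propose to exploit $c=0$ through the ramification of $P$ at $w=0$, calling $0$ ``the high-order point'' of $P$. Since $P'(w)=nw^{n-1}+maw^{m-1}$, for $m=1$ (which is admissible: $n=5$, $m=1$ satisfies $n\geq 2m+3$ and $\gcd(n,m)=1$) one has $P'(0)=ma\neq 0$, so $0$ is not a critical point at all; and even for $m\geq 2$ a zero of $f$ is only a zero of $P(f)-b$ of multiplicity $m$, a gain far too small to lower the threshold from $n>2m+4$ to $n\geq 2m+3$. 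The paper's mechanism is tied to a different choice of $F$: it takes $F=-f^{n}/(af^{m}+b)$, under which a common zero of $f$ and $\mathcal{L}$ is a zero of $F$ and of $G$ of multiplicity at least $n$, hence a zero of $\Phi=\frac{F'}{F-1}-\frac{G'}{G-1}$ of multiplicity at least $n-1$; this is \emph{Lemma \ref{l2.12}}, which bounds $\ol N(r,0;f)=\ol N(r,0;\mathcal{L})$ by $\frac{1}{n-1}$ times small terms and is exactly what pays for the relaxed degree condition. Your uniform choice $F=P(f)$ cannot reproduce this estimate, so part (I) does not close as written.

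A second, related misplacement: you defer the decisive use of $c$ to the elimination of the residual M\"obius configuration after $H\equiv 0$. In the paper the sharing of $c$ is consumed entirely in the $H\not\equiv 0$ branch --- via \emph{Lemma \ref{l2.12}} in part (I), and in part (II) via the bound $\ol N_{L}(r,c;f)+\ol N_{L}(r,c;\mathcal{L})\leq \frac{1}{t+2}\big(N(r,c;f)+N(r,c;\mathcal{L})\big)$ inserted into the second-fundamental-theorem count over the critical values $a_{1},\ldots,a_{n-m}$ --- while the M\"obius elimination is carried out purely by counting the poles of $f$ and $\mathcal{L}$ together with the second fundamental theorem and already succeeds under $n\geq 2m+3$ (resp.\ $n\geq 2k+3$) with no reference to $c$. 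You give no concrete argument for how the multiplicity of the shared $c$-points would interact with the bilinear relation, and since the numerical thresholds are determined by the $H\not\equiv 0$ count, postponing $c$ to the later stage leaves the first branch short of the stated bounds. Part (II) of your sketch is closer to the mark (there $c$ genuinely is a simple critical point and the weight-$t$ truncation is the right tool), but that information must be spent in the counting step, not afterwards.
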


\par We assume that the readers are familiar with the standard notations of Nevanlinna theory such as the Nevanlinna characteristic function $T(r,f)$, the proximity function $ m(r,f)$, the reduced counting function $\ol N(r,\infty;f)$, and so on, which are well explained in \cite{W.K.Hayman_64}. Here we use the symbol $\rho(f)$ to denote the order of a non-constant meromorphic
function $f$, which is defined as $$\rho(f)=\limsup_{r\lra\infty}\frac{\log ^{+}T(r,f)}{\log r}$$

\par By $S(r,f)$ we mean any quantity satisfying $S(r,f)=O(\log (rT(r,f)))$, for all r possibly outside a set of finite Lebesgue measure. If $f$ is a function of finite order, then $S(r,f)=O(\log r)$ for all r. In this paper we consider $f$ as a non-constant meromorphic function having finitely many poles in $\mathbb{C}$, then clearly $\ol N(r,\infty;f)=O(\log r)$.
We now explain some more notations and definitions which are used in this paper.

\begin{defi}\cite{Lahiri_Complex.Var(01)}
	For $a\in \mathbb{C}\cup\{\infty\}$ we denote by $N(r,a;f\mid=1)$ the counting function of simple $a$ points of $f$. For a positive  integer $s$ we denote by $N(r,a;f\mid\leq s)\;(N(r,a;f\mid\geq s))$ the counting function of those $a$-points of $f$ whose multiplicity are not greater(less) than $s$, where each $a$-point is counted according to it's multiplicity.
	\par   Also $ \ol N(r,a;f\mid\leq s)\;(\ol N(r,a;f\mid\geq s))$ is defined similarly, where in counting the $a$-points counted exactly once. 
\end{defi}
\begin{defi}\cite{Lahiri_Archivum}
	We denote $N_{2}(r,a;f)=\ol N(r,a;f)+\ol N(r,a;f\mid\geq 2)$.
\end{defi}
\begin{defi}\cite{Lahiri_Archivum}
	If $s$ is a positive integer, we denote by $N(r,a;f\mid=s)$ the counting function of those a points of $f$ whose multiplicity is $s$, where each point counted according to its multiplicity. \par   Also $ \ol N(r,a;f\mid=s)$ is defined similarly, where in counting the $a$-points counted exactly once. 
	\par Let $z_{0}$ be $a$ point of $f$ and $g$ of multiplicity $p$ and $q$ respectively. Then by $N^{1)}_{E}(r,a;f)$ we denote the counting function of those a-points of $f$ and $g$ where $p = q = 1$. 
\end{defi}
\begin{defi}\cite{Lahiri_Archivum}
	Let $f,\;g$ share a value $a$ IM. We denote by $\ol N_{*}(r,a;f,g)$ the counting function of those $a$-points of $f$ whose multiplicities are different from multiplicities of the corresponding $a$-points of $g$, where each $a$-points is counted exactly once. \par Clearly $\ol N_{*}(r,a;f,g)=\ol N_{*}(r,a;g,f)=\ol N_{L}(r,a;f)+\ol N_{L}(r,a;g)$.  
\end{defi}
\begin{defi}\cite{Ban_Tamkang}
	Let $a,b_{1},b_{2},\ldots,b_{q}\in\mathbb{C}\cup\{\infty\}$. We denote by $N(r,a;f\mid g\not=b_{1},b_{2},\ldots,b_{q})$ the counting function of those $a$-points of $f$, counted according to its multiplicity, which are not $b_{i}$ points of $g$ for $i=1,2,\ldots,q$.
\end{defi}
\begin{defi}\cite{Ban-Lahiri_Comput(2012)}
	Let $P(z)$ be a polynomial such that $P^{'}(z)$ has mutually $k$ distinct  zeros given by $d_{1}, d_{2}, \ldots, d_{k}$ with multiplicities $q_{1}, q_{2}, \ldots, q_{k}$ respectively. Then $P(z)$ is said to satisfy the critical injection property if $P(d_i)\not =P(d_j)$ for $i\not=j$, where $i,j\in \{1,2,\cdot\cdot\cdot,k\}$.
\end{defi}
\section{lemma}
Next, we present some lemmas that will be needed in the sequel. Henceforth, we denote by $H$, $\Phi$ the following functions :
$$H=\bigg(\frac{F''}{F'}-\frac{2F'}{F-1}\bigg)-\bigg(\frac{G''}{G'}-\frac{2G'}{G-1}\bigg)\;$$
and $$\Phi=\frac{F'}{F-1}-\frac{G'}{G-1}$$
\begin{lem}\label{l2.1}\cite{Yi-kodai(1999)}
	Let $F$ and $G$ share $(1,0)$ and $H\not\equiv 0$. Then,
	$$N^{1)}_{E}(r,1;F)\leq N(r,H)+S(r,F)+S(r,G).$$
\end{lem}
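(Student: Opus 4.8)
The plan is to localise the problem at the simple common $1$-points of $F$ and $G$, show by a local expansion that each such point is a zero of $H$, and then globalise the estimate through the First Main Theorem together with the lemma on the logarithmic derivative. The sharing hypothesis $(1,0)$ guarantees that the $1$-points of $F$ and $G$ coincide as sets, so the quantity $N^{1)}_E(r,1;F)$ genuinely counts points that are simple $1$-points of both functions.

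First I would fix a point $z_0$ that is a simple $1$-point of both $F$ and $G$, i.e. a point counted (once) in $N^{1)}_E(r,1;F)$, and write the Taylor expansions $F(z)=1+a_1(z-z_0)+a_2(z-z_0)^2+\cdots$ and $G(z)=1+b_1(z-z_0)+b_2(z-z_0)^2+\cdots$ with $a_1\neq 0$ and $b_1\neq 0$. Computing the two constituent terms one sees that $\frac{F''}{F'}$ is holomorphic at $z_0$, while $\frac{2F'}{F-1}$ has a simple pole there with residue $2$; hence $\frac{F''}{F'}-\frac{2F'}{F-1}=-\frac{2}{z-z_0}+O(z-z_0)$. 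The crucial observation is that in this expansion the constant term vanishes regardless of the higher coefficients, and the same expansion holds verbatim for the $G$-part. Therefore, in forming $H$, both the principal parts $-\frac{2}{z-z_0}$ and the (zero) constant terms cancel, leaving $H(z)=O(z-z_0)$. Thus $z_0$ is a zero of $H$, and since $H\not\equiv 0$ this counting is legitimate.

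This pointwise statement yields the inequality
\[
N^{1)}_E(r,1;F)\leq N\left(r,\frac{1}{H}\right),
\]
because every point counted once in $N^{1)}_E(r,1;F)$ contributes at least one to $N(r,1/H)$. To replace $N(r,1/H)$ by $N(r,H)$ I would invoke the First Main Theorem, $N(r,1/H)\leq T(r,H)+O(1)=m(r,H)+N(r,H)+O(1)$, and then bound the proximity term. Since $H$ is a difference of logarithmic derivatives of $F$, $F-1$, $G$ and $G-1$, the logarithmic derivative lemma gives $m(r,H)=S(r,F)+S(r,G)$. Combining these estimates produces $N^{1)}_E(r,1;F)\leq N(r,H)+S(r,F)+S(r,G)$, as required.

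I expect the only delicate step to be the local computation in the second paragraph: it is essential that the constant term of the expansion of $\frac{F''}{F'}-\frac{2F'}{F-1}$ be zero at every simple $1$-point, since this is exactly what forces the full cancellation in $H$ and makes $z_0$ a zero rather than a generic regular point. Once this is established, the passage from zero-counting to the characteristic function via the First Main Theorem and the identification of $m(r,H)$ as a term of type $S(r,F)+S(r,G)$ are entirely routine.
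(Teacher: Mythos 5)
Your proof is correct, and it is the standard argument for this classical lemma: the local expansion at a common simple $1$-point shows that the constant terms $\tfrac{2a_2}{a_1}$ of $\tfrac{F''}{F'}$ and of $\tfrac{2F'}{F-1}-\tfrac{2}{z-z_0}$ cancel exactly, so each such point is a zero of $H$, and the passage $N(r,1/H)\leq T(r,H)+O(1)=N(r,H)+S(r,F)+S(r,G)$ via the First Main Theorem and the logarithmic derivative lemma is routine. The paper itself offers no proof here, only the citation to Yi's article, and your argument reproduces exactly the proof given there.
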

\begin{lem}\label{l2.2}\cite{Lahiri_Complex.Var(01)}
	If two non-constant meromorphic function $F$ and $G$ share $(1,m)\;$ and $H\not\equiv 0$ then, \beas N(r,\infty;H)&\leq& \ol N(r,0;F\mid \geq 2)+\ol N(r,0;G\mid\geq 2)+\ol N_{*}(r,1;F,G)+\ol N(r,\infty;F\mid\geq 2)+\ol N(r,\infty;G\mid\geq 2)\\&&+\ol N_{0}(r,0;F')+\ol N_{0}(r,0;G'),\eeas where, $\ol N_{0}(r,0;F')$ is the reduced counting function for those zeros of $F'$, which are not zeros of  $F(F-1)$
	and $\ol N_{0}(r,0;G')$ is similarly defined.\end{lem}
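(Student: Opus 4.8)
The plan is to determine every possible pole of $H$ by a local Laurent-expansion analysis, to verify that each surviving singularity is at worst a \emph{simple} pole, and thereby to reduce the estimate to a sum of reduced counting functions. Observe first that $H$ is assembled solely from the logarithmic-derivative-type terms $\frac{F''}{F'}$, $\frac{2F'}{F-1}$ and their $G$-analogues; hence the only points at which $H$ can fail to be holomorphic are the zeros and poles of $F$ and $G$, the zeros of $F'$ and $G'$, and the $1$-points of $F$ and $G$. I would treat each of these classes in turn. Once every surviving pole is shown to be simple, we have $N(r,\infty;H)=\ol N(r,\infty;H)$, so it suffices to bound the reduced counting function of the poles of $H$.

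First I would dispose of the $1$-points. Since $F$ and $G$ share $(1,m)$, at any point $z_{0}$ that is a common $1$-point of $F$ and $G$ with the \emph{same} multiplicity $p$, a direct expansion gives
$$\frac{F''}{F'}-\frac{2F'}{F-1}=-\frac{p+1}{z-z_{0}}+O(1),$$
together with the identical contribution from the $G$-part, so the principal parts cancel and $H$ is holomorphic at $z_{0}$. Consequently the only $1$-points at which $H$ can carry a (necessarily simple) pole are those at which the multiplicities of $F$ and $G$ differ, and these are recorded precisely by $\ol N_{*}(r,1;F,G)$.

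Next I would expand $H$ at the zeros and poles of $F$ and $G$. At a zero of $F$ of multiplicity $p$ one finds $\frac{F''}{F'}=\frac{p-1}{z-z_{0}}+O(1)$ while $\frac{2F'}{F-1}$ stays holomorphic, and at a pole of $F$ of multiplicity $q$ one finds $\frac{F''}{F'}-\frac{2F'}{F-1}=\frac{q-1}{z-z_{0}}+O(1)$. In both cases the principal part vanishes when $p=1$ or $q=1$, so simple zeros and simple poles of $F$ contribute nothing to $H$, and only the multiple ones survive; these account for $\ol N(r,0;F\mid\geq 2)$ and $\ol N(r,\infty;F\mid\geq 2)$, with the symmetric terms coming from $G$. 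Finally, at a zero of $F'$ that is \emph{not} a zero of $F(F-1)$, the term $\frac{F''}{F'}$ acquires a simple pole while every other constituent of $H$ remains holomorphic; such points contribute exactly $\ol N_{0}(r,0;F')$, and likewise $\ol N_{0}(r,0;G')$. Collecting the seven classes and noting that every pole of $H$ has been shown to be simple yields the asserted inequality.

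The difficulty here is bookkeeping rather than conceptual. The two points deserving care are, first, that the cancellation of principal parts at a shared $1$-point is genuinely exact, which relies on the shared multiplicities agreeing (those not agreeing being shunted into $\ol N_{*}$); and second, that the zeros of $F'$ are partitioned correctly into those lying over $0$, those lying over $1$, and the remainder, so that each is counted once and the ``extra'' critical points are exactly those tallied by $\ol N_{0}(r,0;F')$. Since all overlaps among the seven classes only inflate the right-hand side, the upper bound is preserved, and replacing $N(r,\infty;H)$ by $\ol N(r,\infty;H)$ completes the estimate.
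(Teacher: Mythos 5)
Your proof is correct: the complete enumeration of the possible singularities of $H$, the Laurent expansions showing that simple zeros, simple poles, and shared $1$-points of equal multiplicity contribute nothing while every surviving singularity is a simple pole, and the resulting passage to reduced counting functions constitute exactly the standard argument for this lemma. The paper itself gives no proof — it quotes the lemma from Lahiri's 2001 article — and your reconstruction is the one found there, so there is nothing to compare beyond noting that your local computations (residues $p-1$, $q-1$, $-(p+1)$) are all accurate.
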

\begin{lem}\label{l2.3}\cite{Mokhon'ko-lem}
	Let $P(f)=\sum_{k=0}^{n} a_{k}f^{k}/ \sum_{j=0}^{m}b_{j}f^{j},$ be an irreducible polynomial in $f$, with constants coefficient $ \{a_{k}\}$ and $\{b_{j}\}$ where $a_{n}\not=0$ and $b_{m}\not=0$. Then $$T(r,P(f))=dT(r,f)+S(r,f),$$ where $d=\max\{m,n\}.$
\end{lem}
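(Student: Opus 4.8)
This is the classical Valiron--Mokhon'ko lemma, so the plan is to prove the sharp two-sided estimate $T(r,P(f))=d\,T(r,f)+S(r,f)$ by inspecting, through the First Fundamental Theorem, the preimages of a single generic value under the rational map. Write the rational function in lowest terms (this is the irreducibility hypothesis) as $R(w)=\sum_{k=0}^{n}a_k w^k\big/\sum_{j=0}^{m}b_j w^j$, so that the lemma's $P(f)$ equals $R(f)$ and $d=\max\{n,m\}$. First I would fix a finite complex number $a$ that is neither a critical value of $R$ nor equal to $R(\infty)$; all but finitely many $a$ qualify. For such an $a$ the polynomial $(\text{numerator})-a\,(\text{denominator})$ has degree exactly $d$ with $d$ distinct simple roots $c_1,\dots,c_d$, and none of the $c_i$ is a zero of the denominator, since coprimality forces every zero of the denominator to be a genuine pole of $R$ and hence to map to $\infty\neq a$.

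With this choice I would apply the First Fundamental Theorem to $R(f)$ at the value $a$, writing $T(r,R(f))=N(r,a;R(f))+m(r,a;R(f))+O(1)$, and treat the two terms separately. For the counting term I would use the exact correspondence of preimages: near any point where $f=c_i$ the function $R(f)-a$ vanishes to precisely the order to which $f-c_i$ does (as $c_i$ is a simple preimage, $R'(c_i)\neq0$), so that $N(r,a;R(f))=\sum_{i=1}^{d}N(r,c_i;f)$. For the proximity term I would pass to partial fractions: $1/(R(f)-a)=\beta_\infty+\sum_{i=1}^{d}\beta_i/(f-c_i)$ with every $\beta_i\neq0$, and then invoke the standard fact that proximity functions attached to mutually distinct poles add up to within a bounded term, giving $m(r,a;R(f))=\sum_{i=1}^{d}m(r,c_i;f)+O(1)$.

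Adding the two estimates and applying the First Fundamental Theorem to $f$ at each of the points $c_i$ then collapses the whole expression: $T(r,R(f))=\sum_{i=1}^{d}\big(N(r,c_i;f)+m(r,c_i;f)\big)+O(1)=\sum_{i=1}^{d}\big(T(r,f)+O(1)\big)=d\,T(r,f)+O(1)$, which is the asserted equality (the bounded error is absorbed into $S(r,f)$; should the coefficients be permitted to be small functions of $f$ rather than constants, the identical scheme runs with every $O(1)$ replaced by $S(r,f)$).

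I expect the decisive point to be the lower half of the proximity step, namely $m\big(r,\sum_i\beta_i/(f-c_i)\big)\geq\sum_i m(r,c_i;f)-O(1)$, as opposed to the trivial subadditive upper bound: this is exactly where the distinctness of the $c_i$ is indispensable, the argument being that on the portion of $|z|=r$ where $f$ lies near a fixed $c_i$ the $i$-th summand dominates while the remaining summands stay bounded. A secondary but necessary chore is the genericity bookkeeping for $a$ — checking that only finitely many values are discarded, that the degree of the difference of the two polynomials never drops below $d$, and that coprimality keeps the $d$ preimages away from the poles of $R$ throughout.
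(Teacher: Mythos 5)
The paper gives no proof of this lemma at all: it is the classical Valiron--Mokhon'ko theorem, quoted directly from Mokhon'ko's paper and used as a black box, so there is no in-paper argument to compare yours against. Your proposal is a correct rendition of the standard proof. The two pillars are sound: (a) for a value $a$ avoiding $R(\infty)$, the critical values of $R$, and (when $n=m$) the value $a_n/b_m$ that would lower the degree, the polynomial $\sum a_kw^k-a\sum b_jw^j$ has $d$ distinct simple roots $c_1,\dots,c_d$ none of which kills the denominator, whence $N(r,a;R(f))=\sum_i N(r,c_i;f)$ exactly (multiplicities match because $R'(c_i)\neq 0$, and neither the poles of $f$ nor the zeros of the denominator of $R(f)$ can produce $a$-points); and (b) the partial-fraction identity $1/(R(f)-a)=\beta_\infty+\sum_i\beta_i/(f-c_i)$ with all $\beta_i\neq 0$ together with the two-sided proximity estimate $m\bigl(r,\sum_i\beta_i/(f-c_i)\bigr)=\sum_i m(r,c_i;f)+O(1)$, whose nontrivial lower half is the same circle-partition argument that drives the Second Fundamental Theorem and does indeed hinge on the distinctness of the $c_i$. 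Summing and applying the First Fundamental Theorem at each $c_i$ gives $T(r,R(f))=dT(r,f)+O(1)$, which is even slightly stronger than the stated $S(r,f)$ error. The only caveat is your closing aside about small-function coefficients: that extension is not as automatic as you suggest (the roots $c_i$ then vary with $z$), but it is irrelevant here since the lemma assumes constant coefficients.
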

\begin{lem}\label{l2.4}
Let	$P(w) =w^{n} + aw^{m} + b $ and $P_{1}(w) =w^{n} + aw^{n-m} + b $ be two polynomials,  where $n$ and $m$ be relatively prime positive integers and	$a$, $b$ be two non-zero constants. Then they are critically injective polynomials with at most one multiple zero and at least $n-2$ simple zeros, where the multiplicity of the multiple zero is exactly $2$.
\end{lem}
\begin{proof}

Clearly from the given polynomial $P(w)$, we have $P'(w)=z^{m-1}(nz^{n-m}+ma)$. Suppose the zeros of $P'(w)$ are $0,a_{1},a_{2},\ldots,a_{n-m}$, where $a_{i}^{,}s$ $(i=1,2,\ldots,n-m)$ are given as in {\it{Theorem 1.3}}.\par Clearly $P(0)\not=P(a_{i})$ for $i=1,2,\ldots,n-m$. Now suppose contrary to the statement of the lemma, $a_{i},\;a_{j}\;(1\leq i\not= j\leq n-m)$, $P(a_{i})=P(a_{j})$.
\par Now since $a_{i}$, $a_{j}$ are the zeros of $nz^{n-m}+ma$, we have $na_{i}^{n-m}+ma= na_{j}^{n-m}+ma\implies a_{i}^{n-m}=a_{j}^{n-m},$ using this, from $P(a_{i})=P(a_{j})$ we get $a_{i}^{m}=a_{j}^{m}$. Since gcd$(n,m)=1$, then from $a_{i}^{n-m}=a_{j}^{n-m}$ and $a_{i}^{m}=a_{j}^{m}$, we get $a_{i}=a_{j}$, a contradiction. Hence $P(a_{i})=P(a_{j})\implies a_{i}=a_{j}$ or in other words $P(a_{i})\not=P(a_{j})$ for $i\not=j$. So from the definition it follows that $P(w)$ is critically injective. Also it is obvious from the definition of critically injective polynomial, it has at most one multiple zero. For, if not then the two distinct multiple zeros say $\zeta_1$ and $\zeta_2$ would yield $P(\zeta_1)=P(\zeta_2)$, where $\zeta_1, \zeta_2 \in \{a_{1},a_{2},\ldots,a_{n-m}\}$. Therefore $P(w)$ must contain at most one multiple zero of multiplicity two. \par Similarly one can prove the result for $P_{1}(w)$ and hence the proof is complete.
\end{proof}

\begin{lem}\label{l2.5}\cite{Ban_Tamkang}
	If $F$ and $G$ share $(1,s)$ then $$\ol N(r,1;F)+\ol N(r,1;G)+\left(s-\frac{1}{2}\right)\ol N_{*}(r,1;F,G)-N^{1)}(r,1;F) \leq \frac{1}{2}\big( N(r,1;F)+ N(r,1;G)\big).$$
\end{lem}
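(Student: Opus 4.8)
The plan is to prove the inequality by a purely local comparison of the two sides at each $1$-point, since every term in the statement is (a combination of) reduced or full counting functions of $1$-points. First I would exploit the hypothesis that $F$ and $G$ share $(1,s)$, i.e.\ $E_{s}(1;F)=E_{s}(1;G)$. This already forces $F$ and $G$ to share $1$ IM, so the $1$-points of $F$ and of $G$ coincide. More precisely, if $z_{0}$ is a common $1$-point of multiplicity $p$ for $F$ and $q$ for $G$, the equality of the weighted sets gives: whenever $\min\{p,q\}\le s$ one necessarily has $p=q$. Consequently, the points at which the multiplicities differ, which are exactly those counted by $\ol N_{*}(r,1;F,G)$, all satisfy $p\ge s+1$ and $q\ge s+1$.

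Next I would split the $1$-points into three disjoint classes according to the pair $(p,q)$ and compare, class by class, the contribution of a single point to the left- and right-hand sides. Writing $N^{1)}(r,1;F)$ for the counting function of the simple $1$-points common to $F$ and $G$ (those with $p=q=1$), the three classes are: (a) $p=q=1$; (b) $p=q\ge 2$; and (c) $p\ne q$, which by the previous paragraph entails $p,q\ge s+1$. In class (a) the left side receives $1+1+0-1=1$ and the right side $\tfrac12(1+1)=1$. In class (b) the left side receives $1+1+0-0=2$, while the right side receives $\tfrac12(p+q)=p\ge 2$. In class (c) the point is counted once by $\ol N_{*}$ and contributes nothing to $N^{1)}$, so the left side receives $1+1+\bigl(s-\tfrac12\bigr)=s+\tfrac32$.

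The decisive step is the estimate in class (c). Here $p$ and $q$ are two distinct integers, both at least $s+1$, so $p+q$ is minimized by $\{p,q\}=\{s+1,\,s+2\}$, which gives $p+q\ge 2s+3$ and hence a right-side contribution $\tfrac12(p+q)\ge s+\tfrac32$. Thus in each of the three classes the local contribution to the left-hand side never exceeds the local contribution to the right-hand side. Integrating these pointwise inequalities over all $1$-points yields
\[
\ol N(r,1;F)+\ol N(r,1;G)+\Bigl(s-\tfrac12\Bigr)\ol N_{*}(r,1;F,G)-N^{1)}(r,1;F)\le \tfrac12\bigl(N(r,1;F)+N(r,1;G)\bigr),
\]
as required. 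The only real obstacle is the bookkeeping in the first paragraph, namely checking that weighted sharing confines all multiplicity discrepancies to the range $p,q\ge s+1$; once this is in place, the sharp minimization $p+q\ge 2s+3$ makes class (c) the tight case and the rest is routine.
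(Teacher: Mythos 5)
Your proof is correct. Note that the paper itself gives no proof of this lemma (it is quoted from the cited Tamkang paper of Banerjee), and your pointwise comparison of multiplicities at each shared $1$-point --- in particular the observation that weighted sharing forces $p,q\ge s+1$ whenever $p\ne q$, so that $\tfrac12(p+q)\ge s+\tfrac32$ --- is exactly the standard argument behind the cited result, just written locally instead of through the usual bookkeeping with the counting functions $N(r,1;F\mid =1)$, $\ol N(r,1;F\mid\ge 2)$ and $\ol N_{L}$.
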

\begin{lem}\label{l2.6}
		Let $F=-\frac{f^{n}}{af^{m}+b}$ and $G=-\frac{g^{n}}{ag^{m}+b}$, where $f$ and $g$ are any two non-constant meromorphic functions 
		and n, m are relatively prime positive integers , such that $n > m \geq 1$, and a, b are non-zero finite constants. Let $\gamma_{i},\;i=1,2,\ldots,m$ are roots of $aw^{m}+b=0$. If $H\not=0$ and $E_{f}(S,s)=E_{g}(S,s)$ where $S$ is defined as in (\ref{e1.1}) then, 
		\beas\frac{n}{2}\big(T(r,f)+T(r,g)\big)&\leq& 2\big(\ol N(r,0;f)+\ol N(r,0;g)+\ol N(r,\infty;f)+\ol N(r,\infty;g)\big)+\sum_{i=1}^{m}\big(N_{2}(r,\gamma_{i};f)\\&&+N_{2}(r,\gamma_{i};g)\big)+\left(\frac{3}{2}-s\right)\ol N_{*}(r,1;F,G)+S(r,f)+S(r,g).\eeas
\end{lem}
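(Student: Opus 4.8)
The plan is to reduce the whole statement to a standard second–main–theorem estimate for the pair $F,G$, the hypothesis $E_f(S,s)=E_g(S,s)$ being exactly the assertion that $F$ and $G$ share $(1,s)$. Indeed, with $P(w)=w^n+aw^m+b$ one has $F-1=-P(f)/(af^m+b)$ and $G-1=-P(g)/(ag^m+b)$, so the $1$–points of $F$ (resp.\ $G$) are precisely the points where $f\in S$ (resp.\ $g\in S$), with matching weights; hence $F,G$ share $(1,s)$. Since $F=-f^n/(af^m+b)$ is an irreducible rational function of $f$ of degree $\max\{n,m\}=n$, Lemma \ref{l2.3} gives $T(r,F)=nT(r,f)+S(r,f)$ and likewise $T(r,G)=nT(r,g)+S(r,g)$. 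Consequently the asserted inequality is equivalent to bounding $\tfrac12\big(T(r,F)+T(r,G)\big)$ by the displayed right–hand side, and that is what I would establish.

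First I would record the zero/pole structure of $F$. Because $n>m\ge1$, every zero of $F$ comes from a zero of $f$ and has multiplicity a multiple of $n\ge2$; thus $\ol N(r,0;F)=\ol N(r,0;F\mid\ge2)=\ol N(r,0;f)$. The poles of $F$ arise from two disjoint sources: the poles of $f$ (a pole of order $q$ producing a pole of $F$ of order $(n-m)q$) and the points where $af^m+b=0$, i.e.\ $f=\gamma_i$. Since $w=0$ is not a root of $aw^m+b$, each $\gamma_i$ is a simple root, so a $\gamma_i$–point of $f$ of order $t$ produces a pole of $F$ of order exactly $t$. This gives $\ol N(r,\infty;F)=\ol N(r,\infty;f)+\sum_{i=1}^m\ol N(r,\gamma_i;f)$ and, crucially, $\ol N(r,\infty;F\mid\ge2)\le\ol N(r,\infty;f)+\sum_{i=1}^m\ol N(r,\gamma_i;f\mid\ge2)$, the simple $\gamma_i$–points contributing only simple poles. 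The same holds for $G$.

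Next I would apply the ramified second main theorem to $F$ and to $G$ for the values $0,1,\infty$ and add them:
\[
T(r,F)+T(r,G)\le \ol N(r,0;F)+\ol N(r,0;G)+\ol N(r,\infty;F)+\ol N(r,\infty;G)+\ol N(r,1;F)+\ol N(r,1;G)-N_0(r,0;F')-N_0(r,0;G')+S.
\]
To control the $1$–points I would invoke Lemma \ref{l2.5}, which together with $N(r,1;F)\le T(r,F)+O(1)$ yields
\[
\ol N(r,1;F)+\ol N(r,1;G)\le \tfrac12\big(T(r,F)+T(r,G)\big)+N^{1)}(r,1;F)-\Big(s-\tfrac12\Big)\ol N_*(r,1;F,G)+S;
\]
the factor $\tfrac12$ is exactly what absorbs half of $T(r,F)+T(r,G)$ to the left side. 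Then Lemma \ref{l2.1} bounds $N^{1)}(r,1;F)\le N(r,H)+S$ (valid as $F,G$ share $(1,0)$ and $H\not\equiv0$), and Lemma \ref{l2.2} expands $N(r,H)$; the reduced derivative terms $\ol N_0(r,0;F')$, $\ol N_0(r,0;G')$ produced there are dominated by the $-N_0$ terms coming from the second main theorem, so their net contribution is non‑positive.

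Assembling these pieces leaves $\tfrac12(T(r,F)+T(r,G))$ bounded by $\ol N(r,0;F)+\ol N(r,0;F\mid\ge2)$ (and the $G$–analogue), by $\ol N(r,\infty;F)+\ol N(r,\infty;F\mid\ge2)$ (and $G$), and by the surviving $\ol N_*$ term with coefficient $1-(s-\tfrac12)=\tfrac32-s$. Substituting the zero/pole computations of the second paragraph collapses the zero contributions to $2\ol N(r,0;f)$ and the pole contributions to $2\ol N(r,\infty;f)+\sum_i N_2(r,\gamma_i;f)$, which is precisely the stated bound after multiplying through by the relation $T(r,F)+T(r,G)=n\big(T(r,f)+T(r,g)\big)+S$. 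I expect the main obstacle to be the multiplicity bookkeeping at the points $f=\gamma_i$: one must check that $\ol N(r,\infty;F)$ carries the full reduced count while $\ol N(r,\infty;F\mid\ge2)$ carries only the part where $f-\gamma_i$ has a multiple zero, so that their sum is $N_2(r,\gamma_i;f)$ and not $2\ol N(r,\gamma_i;f)$. This step, together with verifying that the derivative terms cancel as an inequality, is exactly where the hypotheses $n>m$, $\gamma_i$ simple, and $H\not\equiv0$ are genuinely used.
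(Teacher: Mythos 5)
Your proposal is correct and follows essentially the same route as the paper: apply the Second Fundamental Theorem to $F$ and $G$ at $0,1,\infty$, control the $1$-points via Lemma \ref{l2.5} together with Lemmas \ref{l2.1} and \ref{l2.2} (producing the $\left(\frac{3}{2}-s\right)\ol N_{*}$ term), and then do the same zero/pole bookkeeping, with $\ol N(r,\infty;F)+\ol N(r,\infty;F\mid\geq 2)$ collapsing to $2\ol N(r,\infty;f)+\sum_{i}N_{2}(r,\gamma_{i};f)$. No substantive difference from the paper's argument.
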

\begin{proof}
	Clearly here $F$ and $G$ share $(1,s)$. 
	By the Second Fundamental Theorem and using {\it{Lemmas \ref{l2.3}}}, {\it{\ref{l2.1}}}, {\it{\ref{l2.2}}} and {\it{\ref{l2.5}}} we have, \beas && n(T(r,f)+T(r,g))= T(r,F)+T(r,G)\\&\leq& \ol N(r,1;F)+\ol N(r,0;F)+\ol N(r,\infty;F) +\ol N(r,1;G)+\ol N(r,0;G)+\ol N(r,\infty;G)\\&&-N_{0}(r,0;F')-N_{0}(r,0;G')+S(r,F)+S(r,G) \\&\leq& \frac{n}{2}(T(r,f)+T(r,g))+N^{1)}(r,1,F;G)+\left(\frac{1}{2}-s\right)\ol N_{*}(r,1;F,G)+\ol N(r,0;f)\\&&+\ol N(r,\infty;f)+\ol N(r,0;af^{m}+b)+\ol N(r,0;g)+\ol N(r,\infty;g)+\ol N(r,0;ag^{m}+b)\\&&-N_{0}(r,0;F')-N_{0}(r,0;G')+S(r,f)+S(r,g).\eeas
	i.e., \beas&&\frac{n}{2}(T(r,f)+T(r,g))\\&\leq&2\ol N(r,\infty;f)+2\ol N(r,\infty;g)+2\ol N(r,0;f)+2\ol N(r,0;g) +N_{2}(r,0;af^{m}+b)\\&&+N_{2}(r,0;ag^{m}+b)+\left(\frac{3}{2}-s\right)\ol N_{*}(r,1;F,G)+S(r,f)+S(r,g)\\&\leq&2\big(\ol N(r,0;f)+\ol N(r,0;g)+\ol N(r,\infty;f)+\ol N(r,\infty;g)\big)+\sum_{i=1}^{m}\big(N_{2}(r,\gamma_{i};f)+N_{2}(r,\gamma_{i};g)\big)\\&&+\left(\frac{3}{2}-s\right)\ol N_{*}(r,1;F,G)+S(r,f)+S(r,g). \eeas
\end{proof}

\begin{lem}\label{l2.7}
	Let $F = -(f^{n} + af^{n-m})/b$ and $G = -(g^{n} + ag^{n-m})/b$, where $f$ and $g$ be any two non-constant meromorphic functions 
and	$n$, $m$ be relatively prime positive integers such that $n > m \geq 1$, and $a$, $b$ be non-zero finite constants. Let $\delta_{i},\;i=1,2,\ldots,m$ be the distinct roots of the equation $w^{m}+a = 0$. If $H \not\equiv 0$ and $E_{f}(S_{1},s)=E_{g}(S_{1},s)$ where $S_{1}$ is defined as in (\ref{e1.2}). Then,  \beas\frac{n}{2}\big(T(r,f)+T(r,g)\big)&\leq& 2\big(\ol N(r,0;f)+\ol N(r,0;g)+\ol N(r,\infty;f)+\ol N(r,\infty;g)\big)+\sum_{i=1}^{m}\big(N_{2}(r,\delta_{i};f)\\&&+N_{2}(r,\delta_{i};g)\big)+\left(\frac{3}{2}-s\right)\ol N_{*}(r,1;F,G)+S(r,f)+S(r,g).\eeas\end{lem}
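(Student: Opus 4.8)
The plan is to follow the proof of \emph{Lemma \ref{l2.6}} essentially verbatim, the sole genuine change being the analysis of the zeros and poles of the modified functions $F=-(f^{n}+af^{n-m})/b$ and $G=-(g^{n}+ag^{n-m})/b$. I would begin by recording the two facts that set the machinery in motion. First, since
$$F-1=-\frac{f^{n}+af^{n-m}+b}{b}=-\frac{P_{1}(f)}{b}\qquad\text{and}\qquad G-1=-\frac{P_{1}(g)}{b},$$
the hypothesis $E_{f}(S_{1},s)=E_{g}(S_{1},s)$ says precisely that $F$ and $G$ share $(1,s)$. Second, as $F$ is a polynomial of degree $n$ in $f$, \emph{Lemma \ref{l2.3}} gives $T(r,F)=nT(r,f)+S(r,f)$ and likewise $T(r,G)=nT(r,g)+S(r,g)$, so that $T(r,F)+T(r,G)=n\big(T(r,f)+T(r,g)\big)+S(r,f)+S(r,g)$.

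With these in hand I would run the Second Fundamental Theorem for $F$ and $G$ at $0,1,\infty$ and apply \emph{Lemmas \ref{l2.1}}, \emph{\ref{l2.2}} and \emph{\ref{l2.5}} exactly as in \emph{Lemma \ref{l2.6}}. Lemma \ref{l2.5} converts $\ol N(r,1;F)+\ol N(r,1;G)$ into $\tfrac12\big(N(r,1;F)+N(r,1;G)\big)+N^{1)}(r,1;F)-(s-\tfrac12)\ol N_{*}(r,1;F,G)$; the half-characteristic term is absorbed on the left using $N(r,1;F)\le T(r,F)+O(1)$, while \emph{Lemmas \ref{l2.1}} and \emph{\ref{l2.2}} bound $N^{1)}(r,1;F)$ by $N(r,H)$ and hence by the multiple-point counting functions of $F$ and $G$ at $0$ and $\infty$ together with $\ol N_{*}(r,1;F,G)$. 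The terms $\ol N_{0}(r,0;F')$ and $\ol N_{0}(r,0;G')$ furnished by \emph{Lemma \ref{l2.2}} cancel against the corresponding negative terms coming from the Second Fundamental Theorem, leaving the intermediate estimate
$$\tfrac12\big(T(r,F)+T(r,G)\big)\le N_{2}(r,0;F)+N_{2}(r,\infty;F)+N_{2}(r,0;G)+N_{2}(r,\infty;G)+\Big(\tfrac32-s\Big)\ol N_{*}(r,1;F,G)+S(r,f)+S(r,g).$$

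The only step demanding a fresh computation, and the sole point of departure from \emph{Lemma \ref{l2.6}}, is the evaluation of $N_{2}(r,0;F)$ and $N_{2}(r,\infty;F)$ for the present $F$. Writing $F=-f^{n-m}(f^{m}+a)/b$, the zeros of $F$ sit over the zeros of $f$ (with multiplicity $(n-m)$ times that of the zero of $f$) and over the $\delta_{i}$-points of $f$, where $\delta_{1},\dots,\delta_{m}$ are the \emph{simple} roots of $w^{m}+a=0$; since each zero of $f$ contributes at most $2$ to $N_{2}(r,0;F)$ and the simple roots $\delta_{i}$ transfer their $N_{2}$-count intact, one gets $N_{2}(r,0;F)\le 2\ol N(r,0;f)+\sum_{i=1}^{m}N_{2}(r,\delta_{i};f)$. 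Because $F$ is a polynomial in $f$, its poles are exactly the poles of $f$, each of multiplicity $n\ge 2$, so $N_{2}(r,\infty;F)=2\ol N(r,\infty;f)$; the same identities hold for $G$. Here the interesting structural feature is that the $m$ distinguished roots, which appeared as \emph{poles} of $F$ in \emph{Lemma \ref{l2.6}} (the roots of $af^{m}+b$), now surface as \emph{zeros} of $F$ (the roots of $f^{m}+a$); nonetheless they land in the same $\sum_{i}N_{2}$ term, so the final shape is unchanged. Substituting these counts together with $T(r,F)+T(r,G)=n(T(r,f)+T(r,g))+S$ into the intermediate estimate and rearranging yields exactly the asserted inequality. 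I expect no real obstacle: the argument is structurally identical to \emph{Lemma \ref{l2.6}}, and the only care needed is the multiplicity bookkeeping at $f=0$ (governed by the exponent $n-m\ge1$) versus at the simple roots $\delta_{i}$, which \emph{Lemma \ref{l2.4}} together with $\gcd(n,m)=1$ keeps under control.
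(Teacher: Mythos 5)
Your proposal is correct and follows exactly the route the paper intends: the paper omits the proof of Lemma \ref{l2.7} on the grounds that it is "similar to the proof of Lemma \ref{l2.6}," and your argument is precisely that adaptation, with the only substantive change being the correct relocation of the distinguished points (the roots $\delta_i$ of $w^m+a=0$ now enter through $N_2(r,0;F)$ rather than through $N_2(r,\infty;F)$ as the roots of $aw^m+b=0$ did in Lemma \ref{l2.6}). The multiplicity bookkeeping at $f=0$ and at the simple roots $\delta_i$, and the identity $N_2(r,\infty;F)=2\ol N(r,\infty;f)$, are all handled correctly.
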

\begin{proof}
	We omit the proof since this is similar to the proof of {\it{Lemma 2.6}}.
\end{proof}
Now before discussing the next lemmas we define some notations\;. \\By $\Theta(Q(w))$  we denote the number of distinct zeros of any polynomial $Q(w)$ of degree $d$ and here \beas \chi^{Q}_{d}&=&0\;, \;\;when\;\Theta(Q(w))=d\\ &=&1\;,\;\;when\;\Theta(Q(w))=d-1.\eeas
\begin{lem}\label{l2.8}
Let $F$ and $G$ be defined as in {\it{Lemma \ref{l2.6}}} and share $(1,s)$ then $$ \ol N_{L}(r,1;F)\leq \frac{1}{s+1}\big(\ol N(r,0;f)+\ol N(r,\infty;f)+\chi^{P}_{n}N(r,\alpha_{j};f)\big)+S(r,f),$$ where  $\alpha_{j}\;(1\leq j\leq n-1)$ be the multiple root 
of the equation $P(w)=w^{n}+aw^{m}+b=0$. 
\end{lem}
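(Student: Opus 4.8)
The plan is to exploit the factorisation $F-1=-P(f)/(af^m+b)$, which shows that every $1$-point of $F$ is a zero of $P(f)$, i.e.\ a point where $f$ assumes one of the roots $\alpha_1,\dots,\alpha_l$ of $P$. By \emph{Lemma \ref{l2.4}} all these roots are simple except for at most one double root $\alpha_j$, which occurs precisely when $\chi^{P}_{n}=1$. First I would record the multiplicity bookkeeping at a point $z_0$ contributing to $\ol N_L(r,1;F)$: since $F$ and $G$ share $(1,s)$, the multiplicity of $F$ there is at least $s+2$. If $f(z_0)=\alpha_i$ is a simple root and $f-\alpha_i$ vanishes to order $\mu$, then $F-1$ vanishes to order $\mu$, so $\mu\ge s+2$; if $f(z_0)=\alpha_j$ is the double root, then $F-1$ vanishes to order $2\mu$, so $2\mu\ge s+2$. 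In either case $z_0$ is neither a zero nor a pole of $f$, since $\alpha_i\ne 0$ and $a\alpha_i^{m}+b\ne 0$ (otherwise $P(\alpha_i)=0$ would force $b=0$).

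The central device is the logarithmic derivative $f'/f$. At a point where $f$ takes a finite non-zero value with multiplicity $\mu$, the function $f'/f$ has a zero of order $\mu-1$; moreover its only poles are simple, located at the zeros and poles of $f$, so the lemma on the logarithmic derivative gives $T(r,f'/f)=\ol N(r,0;f)+\ol N(r,\infty;f)+S(r,f)$ and hence $N(r,0;f'/f)\le \ol N(r,0;f)+\ol N(r,\infty;f)+S(r,f)$. I would then distribute the weight $s+1$ demanded by each $L$-point. A simple-root $L$-point contributes a zero of $f'/f$ of order $\mu-1\ge s+1$, so it can be charged entirely to $N(r,0;f'/f)$. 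A double-root $L$-point contributes a zero of $f'/f$ of order $\mu-1$ together with multiplicity $\mu$ in $N(r,\alpha_j;f)$, and the decisive inequality $(\mu-1)+\mu=2\mu-1\ge s+1$, which is exactly the $L$-point condition $2\mu\ge s+2$, lets me charge it to the sum of these two counting functions.

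Since simple-root $L$-points and double-root $L$-points occur at distinct $f$-values, their contributions to $N(r,0;f'/f)$ do not overlap, and summing over all $L$-points yields
\[(s+1)\,\ol N_L(r,1;F)\le N(r,0;f'/f)+\chi^{P}_{n}\,N(r,\alpha_j;f)+S(r,f).\]
Substituting the estimate for $N(r,0;f'/f)$ and dividing by $s+1$ then produces the asserted bound. The step I expect to be the main obstacle is the double-root case: there the zero of $f'/f$ alone (of order $\mu-1$, which is $0$ when $\mu=1$ and $s=0$) does not supply the required weight $s+1$, and it is precisely to absorb this deficit that the extra term $\chi^{P}_{n}N(r,\alpha_j;f)$, counted with multiplicity rather than reduced, must be introduced; checking the inequality $2\mu-1\ge s+1$ and verifying that no point is charged twice is the delicate part of the bookkeeping.
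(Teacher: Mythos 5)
Your proposal is correct and follows essentially the same route as the paper: the paper bounds $\ol N_L(r,1;F)$ by $\frac{1}{s+1}N(r,0;F'\mid\geq s+1;F=1)$, transfers the zeros of $F'$ at $1$-points to zeros of $f'$ plus the $N(r,\alpha_j;f)$ term for the double root, and then invokes the same logarithmic-derivative estimate $N(r,0;f'/f)\leq \ol N(r,0;f)+\ol N(r,\infty;f)+S(r,f)$ that you use. Your multiplicity bookkeeping ($\mu-1\geq s+1$ at simple roots, $2\mu-1\geq s+1$ at the double root) is exactly the content of the paper's chain of inequalities, just written out pointwise.
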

\begin{proof} In view of {\it Lemma 2.4}, first suppose that the equation $P(w)=w^{n}+aw^{m}+b=0$ has one multiple root and assume that as $\alpha_{j}$ ($1\leq j\leq n-1$). Then $\chi^{P}_{n}=1$.
	\beas \ol N_{L}(r,1;F)&\leq& \ol N(r,1;F\mid\geq s+2)\\&\leq& \ol N(r,0;F'\mid\geq s+1;F=1)\\&\leq&\frac{1}{s+1}N(r,0;F'\mid\geq s+1;F=1)\\&\leq&\frac{1}{s+1}\big(N(r,0;f'\mid f\not=0) + N(r,\alpha_{j};f)-N_{o}(r,0;f') \big)\\&\leq&\frac{1}{s+1}\big( N(r,0;\frac{f'}{f})+N(r,\alpha_{j};f)-N_{o}(r,0;f') \big)\\&\leq&\frac{1}{s+1}\big(\ol N(r,\infty;f)+\ol N(r,0;f)+N(r,\alpha_{j};f)-N_{o}(r,0;f') \big)+S(r,f)\\&\leq&\frac{1}{s+1}\big(\ol N(r,0;f)+\ol N(r,\infty;f)+N(r,\alpha_{j};f)-N_{o}(r,0;f') \big)+S(r,f), \eeas  where $N_{o}(r,0;f')=N(r,0;f'\mid f\not=0,\alpha_{1},\alpha_{2},\ldots,\alpha_{n-1})$.
	 \par Next suppose all the $n$ roots of $P(w)$ are distinct. Then $\chi^{P}_{n}=0$, and also from above calculations we have $$\ol N_{L}(r,1;F)\leq \frac{1}{s+1}\big(\ol N(r,0;f)+\ol N(r,\infty;f)-N(r,0;f'\mid f\not=0,\alpha_{1},\alpha_{2},\ldots,\alpha_{n})\big)+S(r,f),$$  hence the proof is complete.
\end{proof}
\begin{lem}\label{l2.9}
	Let $F$ and $G$ be defined in {\it{Lemma \ref{l2.7}}}, share $(1,s)$ then $$ \ol N_{L}(r,1;F)\leq \frac{1}{s+1}\big(\ol N(r,0;f)+\ol N(r,\infty;f)+\chi^{P_{1}}_{n}N(r,\beta_{j};f)\big)+S(r,f),$$ where  $\beta_{j}\;(1\leq j\leq n-1)$ be the multiple root 
	of the equation $P_{1}(w)=w^{n}+aw^{n-m}+b=0$. 	
\end{lem}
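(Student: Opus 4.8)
The plan is to transport the argument used for \emph{Lemma \ref{l2.8}} from the polynomial $P$ to $P_{1}$, the computations being in fact a little simpler here. The key structural observation is that with $F=-(f^{n}+af^{n-m})/b$ one has
$$F-1=-\frac{f^{n}+af^{n-m}+b}{b}=-\frac{P_{1}(f)}{b},$$
so that the $1$-points of $F$ are exactly the points where $f$ attains a zero of $P_{1}$ (equivalently a value of $S_{1}$), and differentiation gives the clean identity $F'=-P_{1}'(f)f'/b$. Since $P_{1}(0)=b\neq 0$, a zero of $f$ can never be a $1$-point of $F$; and by \emph{Lemma \ref{l2.4}} the polynomial $P_{1}$ carries at most one multiple zero $\beta_{j}$, of multiplicity exactly two, so that $\beta_{j}$ is a \emph{simple} zero of $P_{1}'$.

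First I would treat the case $\chi^{P_{1}}_{n}=1$, i.e. $P_{1}$ has the double zero $\beta_{j}$. As $F$ and $G$ share $(1,s)$, every point contributing to $\ol N_{L}(r,1;F)$ is a $1$-point of $F$ of multiplicity at least $s+2$, at which $F'$ vanishes to order at least $s+1$; hence
$$\ol N_{L}(r,1;F)\leq \ol N(r,1;F\mid\geq s+2)\leq \ol N(r,0;F'\mid\geq s+1;F=1)\leq \tfrac{1}{s+1}N(r,0;F'\mid\geq s+1;F=1).$$
Reading $F'=-P_{1}'(f)f'/b$ at a $1$-point of $F$, the zeros of $F'$ come from the factor $f'$ in the generic case (where $f$ meets a simple zero of $P_{1}$) and, only at points where $f=\beta_{j}$, additionally from the factor $P_{1}'(f)$. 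Therefore they are dominated by the zeros of $f'$ at which $f\neq 0$ together with the $\beta_{j}$-points of $f$, giving
$$N(r,0;F'\mid\geq s+1;F=1)\leq N(r,0;f'\mid f\neq 0)+N(r,\beta_{j};f)-N_{o}(r,0;f'),$$
where $N_{o}(r,0;f')=N(r,0;f'\mid f\neq 0,\beta_{1},\ldots,\beta_{n-1})$.

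To close, I would bound $N(r,0;f'\mid f\neq 0)\leq N(r,0;f'/f)\leq \ol N(r,0;f)+\ol N(r,\infty;f)+S(r,f)$ using the first fundamental theorem for $f'/f$ and the lemma on the logarithmic derivative. Inserting this and discarding the non-positive term $-N_{o}(r,0;f')$ yields
$$\ol N_{L}(r,1;F)\leq \tfrac{1}{s+1}\big(\ol N(r,0;f)+\ol N(r,\infty;f)+N(r,\beta_{j};f)\big)+S(r,f),$$
which is the assertion when $\chi^{P_{1}}_{n}=1$. The case $\chi^{P_{1}}_{n}=0$ is identical save that $P_{1}'(f)$ never vanishes at a $1$-point of $F$, so the $N(r,\beta_{j};f)$ term drops out, matching $\chi^{P_{1}}_{n}=0$. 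I expect the only point demanding care to be the multiplicity accounting in the middle display: one must check that when $f-\beta_{j}$ vanishes to order $p$ the function $F'$ vanishes to order exactly $2p-1$, which is precisely where \emph{Lemma \ref{l2.4}} — guaranteeing that $\beta_{j}$ is a double, not higher, zero of $P_{1}$, equivalently a simple zero of $P_{1}'$ — enters; the remaining steps parallel \emph{Lemma \ref{l2.8}} verbatim.
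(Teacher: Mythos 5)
Your proposal is correct and follows essentially the same route as the paper, which itself simply refers the reader back to the proof of \emph{Lemma \ref{l2.8}}: the chain $\ol N_{L}(r,1;F)\leq \ol N(r,1;F\mid\geq s+2)\leq \frac{1}{s+1}N(r,0;F'\mid\geq s+1;F=1)$, the split of the zeros of $F'=-P_{1}'(f)f'/b$ into those coming from $f'$ and those coming from the (at most one, double) multiple zero $\beta_{j}$ of $P_{1}$, and the final bound via $N(r,0;f'/f)\leq \ol N(r,0;f)+\ol N(r,\infty;f)+S(r,f)$ are exactly the steps of that proof transported to $P_{1}$. Your explicit verification of the multiplicity count $2p-1$ at $\beta_{j}$-points, using that $\beta_{j}$ is a simple zero of $P_{1}'$ by \emph{Lemma \ref{l2.4}}, is a welcome extra precision but not a departure from the paper's argument.
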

\begin{proof}
	We omit this proof because it is similar to the proof of {\it{Lemma \ref{l2.8}}}. 
\end{proof}
\begin{lem}\label{l2.10}
Let $f$ be a non-constant meromorphic function having finitely many poles and $\mathcal{L}$ be a $L$-function such that $E_{f}(S,s)=E_{\mathcal{L}}(S,s)$, where $S$ is defined as in (\ref{e1.1}). Then for $n>2$, $$\frac{f^{n}}{af^{m}+b}.\frac{\mathcal{L}^{n}}{a\mathcal{L}^{m}+b}\not=1.$$
\end{lem}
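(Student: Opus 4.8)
The plan is to argue by contradiction. Suppose that
$$\frac{f^{n}}{af^{m}+b}\cdot\frac{\mathcal{L}^{n}}{a\mathcal{L}^{m}+b}\equiv 1,$$
which, in the notation of \emph{Lemma \ref{l2.6}}, is precisely the assertion $FG\equiv 1$, i.e. $G\equiv 1/F$. Two preliminary reductions drive the argument. First, by \emph{Lemma \ref{l2.3}} one has $T(r,F)=nT(r,f)+S(r,f)$ and $T(r,G)=nT(r,\mathcal{L})+S(r,\mathcal{L})$, so $T(r,1/F)=T(r,F)+O(1)$ forces $T(r,f)=T(r,\mathcal{L})+S$; I will henceforth write $T(r)$ for either. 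Second, since $f$ has finitely many poles and $\mathcal{L}$ has at most a single pole (at $s=1$), both $\ol N(r,\infty;f)$ and $\ol N(r,\infty;\mathcal{L})$ are $O(\log r)=S$, while $aw^{m}+b$ has exactly $m$ distinct simple roots $\gamma_{1},\dots,\gamma_{m}$, all nonzero since $b\neq0$.

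The heart of the proof is the multiplicity bookkeeping forced by $G\equiv 1/F$. Away from the finitely many poles, the zeros of $F=-f^{n}/(af^{m}+b)$ arise only from zeros of $f$ (a pole of $f$ produces, since $n>m$, a pole of order $(n-m)t$ rather than a zero of $F$), and a zero of $f$ of multiplicity $p$ gives a zero of $F$ of order $np$. Matching this against the poles of $G=-\mathcal{L}^{n}/(a\mathcal{L}^{m}+b)$ — which occur exactly where $\mathcal{L}=\gamma_{j}$, each $\gamma_{j}$ being a simple root of $aw^{m}+b$ — I read off that every zero of $f$ is a $\gamma_{j}$-point of $\mathcal{L}$ of multiplicity exactly $n$ times that of the zero; in particular all such $\gamma_{j}$-points of $\mathcal{L}$ have multiplicity $\ge n$ and they exhaust the $\gamma_{j}$-points of $\mathcal{L}$. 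The symmetric analysis of the poles of $F$ against the zeros of $G$ shows dually that every $\gamma_{j}$-point of $f$ is a zero of $\mathcal{L}$, hence of multiplicity $\ge n$. These two correspondences yield the estimates $\sum_{j=1}^{m}\ol N(r,\gamma_{j};f)\le \frac{m}{n}T(r)+S$ and $N(r,0;f)=\frac1n\sum_{j=1}^{m} N(r,\gamma_{j};\mathcal{L})+S\le \frac{m}{n}T(r)+S$.

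Finally I would apply the Second Fundamental Theorem to $f$ at the $m+2$ distinct targets $0,\gamma_{1},\dots,\gamma_{m},\infty$, which gives
$$m\,T(r)\le \ol N(r,0;f)+\sum_{j=1}^{m}\ol N(r,\gamma_{j};f)+\ol N(r,\infty;f)+S\le \frac{2m}{n}T(r)+S,$$
so that $\frac{m(n-2)}{n}\,T(r)\le S(r)$. Since $m\ge 1$ and $n>2$, the coefficient is strictly positive, forcing $T(r)\le S(r)$, which is impossible for the non-constant $f$. This contradiction establishes the lemma. I expect the main obstacle to be the careful multiplicity bookkeeping of the middle paragraph, in particular checking that the exceptional contributions — the finitely many poles of $f$ and the single pole of $\mathcal{L}$ at $s=1$ — are genuinely absorbed into the $S(r)$ terms; once the two multiplicity correspondences are pinned down, the hypothesis $n>2$ closes the argument uniformly in $m$.
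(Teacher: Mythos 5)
Your proposal is correct and follows essentially the same route as the paper: assume $FG\equiv 1$, use the forced correspondence between zeros of $f$ and $\gamma_j$-points of $\mathcal{L}$ (and dually) to show all relevant multiplicities are at least $n$, and then apply the Second Fundamental Theorem to $f$ at $0,\gamma_1,\dots,\gamma_m,\infty$ to obtain $mT(r)\leq \frac{2m}{n}T(r)+O(\log r)$, a contradiction for $n>2$. The multiplicity bookkeeping you flag as the main obstacle is exactly the step the paper carries out (via ``$p=nq\implies p\geq n$''), so nothing further is needed.
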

\begin{proof}
	Contrary to the hypothesis let us take  
	\ $$\frac{f^{n}}{af^{m}+b}.\frac{\mathcal{L}^{n}}{a\mathcal{L}^{m}+b}=1.$$ By {\it{Lemma \ref{l2.3}}} we have \bea\label{e2.1} T(r,f)=T(r,\mathcal{L})+S(r,\mathcal{L}).\eea
	\par Since $f$ has finite number of poles and $\mathcal{L}$ has at most one pole, we have $\ol N(r,\infty;f)=\ol N(r,\infty;\mathcal{L})=O(\log r)$. Also from {\it{Lemma \ref{l2.11}}} it is obvious that $S(r,f)=S(r,\mathcal{L})=O(\log r)$. \par As mentioned in {\it Lemma \ref{l2.6}} we know that $\gamma_{1},\gamma_{2},\ldots,\gamma_{m}$ are roots of $aw^{m}+b=0$. Let $z_{0}$ be a zero of $af^{m}+b$ with multiplicity $p$ and also a zero of $\mathcal{L}$ with multiplicity $q$. \par Then $p=nq\implies p\geq n$. \par Thus $$\ol N(r,\gamma_{i};f)\leq \frac{1}{n}N(r,\gamma_{i};f).$$ By the same arguments one can show $$\ol N(r,\gamma_{i};\mathcal{L})\leq \frac{1}{n}N(r,\gamma_{i};\mathcal{L})$$  Now using this, (\ref{e2.1}) and the Second Fundamental Theorem we get
	\beas mT(r,f)&\leq& \ol N(r,0;f)+\ol N(r,\infty;f)+\sum_{i=1}^{m}\ol N(r,\gamma_{i};f)+S(r,f)\\&\leq&\ol N(r,\infty;\mathcal{L})+\ol N(r,0;a\mathcal{L}^{m}+b)+\frac{1}{n}\sum_{i=1}^{m} N(r,\gamma_{i};f)+O(\log r)\\&\leq& \frac{2m}{n}T(r,f)+O(\log r),\eeas which gives a contradiction.
	 
\end{proof}
\begin{lem}\label{l2.11}
	Let $f$ be a meromorphic function having finitely many poles in $\mathbb{C}$ and $S$ (or $S_1$) be defined as in (\ref{e1.1}) ((\ref{e1.2})). If $f$ and a non-constant $L$-function $\mathcal{L}$ share the set $S\;(or\;S_{1})$ IM, then $\rho(f) =\rho(\mathcal{L}) = 1.$
\end{lem}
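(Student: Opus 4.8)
The plan is to show that IM set sharing forces the Nevanlinna characteristics of $f$ and $\mathcal{L}$ to have comparable growth, and then to invoke the known fact that every $L$-function in the Selberg class has order $1$. I would begin by recording that $\rho(\mathcal{L})=1$; this is a standard consequence of the functional equation in axiom $(iii)$, the $\Gamma$-factors forcing the estimate $T(r,\mathcal{L})=\frac{d_{\mathcal{L}}}{\pi}\,r\log r+O(r)$, so that in particular $\mathcal{L}$ is transcendental of order exactly $1$ and $S(r,\mathcal{L})=O(\log r)$. Since $f$ has only finitely many poles and $\mathcal{L}$ has at most one pole, both reduced pole-counting functions satisfy $\ol N(r,\infty;f)=O(\log r)$ and $\ol N(r,\infty;\mathcal{L})=O(\log r)$.

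The key observation is that sharing $S$ (resp.\ $S_{1}$) IM means precisely that $P(f)$ and $P(\mathcal{L})$ (resp.\ $P_{1}(f)$ and $P_{1}(\mathcal{L})$) share the value $0$ IM, so that their reduced zero-counting functions coincide. Writing $S=\{\alpha_{1},\dots,\alpha_{l}\}$ with $n-1\le l\le n$ by {\it Lemma \ref{l2.4}}, and noting that the $\alpha_{i}$ are distinct, we have $\sum_{i=1}^{l}\ol N(r,\alpha_{i};f)=\ol N(r,0;P(f))=\ol N(r,0;P(\mathcal{L}))$. Applying the Second Fundamental Theorem to $f$ with the $l$ values $\alpha_{1},\dots,\alpha_{l}$ together with $\infty$ gives
$$(l-1)T(r,f)\le \sum_{i=1}^{l}\ol N(r,\alpha_{i};f)+\ol N(r,\infty;f)+S(r,f).$$
Using the coincidence above and {\it Lemma \ref{l2.3}}, which yields $\ol N(r,0;P(\mathcal{L}))\le T(r,P(\mathcal{L}))+O(1)=nT(r,\mathcal{L})+S(r,\mathcal{L})$, the right-hand side is bounded by $nT(r,\mathcal{L})+O(\log r)+S(r,f)$. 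Since $l\ge n-1\ge 2$ and $S(r,f)=o(T(r,f))$ off an exceptional set of finite measure, this forces $T(r,f)=O(T(r,\mathcal{L}))=O(r\log r)$, and hence $\rho(f)\le 1$.

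For the reverse inequality I would apply the Second Fundamental Theorem to $\mathcal{L}$ with the same values, obtaining $(l-1)T(r,\mathcal{L})\le \ol N(r,0;P(\mathcal{L}))+O(\log r)+S(r,\mathcal{L})$; replacing $\ol N(r,0;P(\mathcal{L}))$ by $\ol N(r,0;P(f))\le nT(r,f)+S(r,f)$ shows $T(r,\mathcal{L})=O(T(r,f))$. Because $\rho(\mathcal{L})=1$ guarantees that $T(r,\mathcal{L})$ genuinely grows like $r\log r$ (so the degenerate situation in which $P(\mathcal{L})$ omits $0$ cannot arise), this yields $T(r,f)\gtrsim r\log r$ and therefore $\rho(f)\ge 1$. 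Combining the two estimates gives $\rho(f)=\rho(\mathcal{L})=1$. The argument for the set $S_{1}$ is identical, with $P_{1}$ in place of $P$ and {\it Lemma \ref{l2.4}} again guaranteeing $n-1\le l\le n$ distinct roots.

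The main technical point to handle carefully is the bookkeeping of the error term $S(r,f)$: a priori we only know $S(r,f)=O(\log(rT(r,f)))$ outside a set of finite Lebesgue measure, so the passage from the two comparison inequalities to the order statement must use the monotonicity of $T(r,f)$ to absorb the exceptional set, exactly as in the standard derivation that $T(r,f)=O(T(r,\mathcal{L}))$ implies $\rho(f)\le\rho(\mathcal{L})$. The only external input is the known value $\rho(\mathcal{L})=1$; everything else is a two-sided application of the Second Fundamental Theorem combined with the IM set-sharing hypothesis and the finiteness of the poles of $f$.
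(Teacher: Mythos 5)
Your argument is correct; the paper itself omits the proof of this lemma, deferring to the proof of Theorem 5 in Yuan--Li--Yi, and the argument there is essentially the one you reconstruct: a two-sided application of the Second Fundamental Theorem to the values $\alpha_{1},\dots,\alpha_{l},\infty$, the identity $\ol N(r,0;P(f))=\ol N(r,0;P(\mathcal{L}))$ coming from IM sharing of $S$, Mokhon'ko's lemma, and Steuding's asymptotic $T(r,\mathcal{L})=\frac{d_{\mathcal{L}}}{\pi}r\log r+O(r)$. Your handling of the exceptional set and of the lower bound $\rho(f)\ge 1$ is also the standard one, so the proposal can stand as a self-contained replacement for the citation.
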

\begin{proof}
We omit the proof as it can be found out in the proof of Theorem 5, \{p. 6, \cite{Yan-Li-Yi_Lith}\}.
\end{proof}
\begin{lem}\label{l2.12}
	Let $F$ and $G$ be defined in {\it{Lemma \ref{l2.6}}} such that they share $(1,s)$ and $f$, $g$ share $\{0\}$ IM, then \beas \ol N(r,0;f)=\ol N(r,0;g)&\leq& \frac{1}{n-1}\{\ol N_{*}(r,1;F,G)+\ol N(r,\infty;f)+\ol N(r,\infty;g)+\ol N(r,0;af^{m}+b)\\&&+\ol N(r,0;ag^{m}+b)\}+S(r,f)+S(r,g).
	\eeas
\end{lem}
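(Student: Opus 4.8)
The plan is to exploit the auxiliary function $\Phi=\frac{F'}{F-1}-\frac{G'}{G-1}$ already introduced above, together with the observation that the IM sharing of $\{0\}$ forces every common zero of $f$ and $g$ to be a zero of $\Phi$ of high order. Since $f$ and $g$ share $\{0\}$ IM their zero sets coincide, so $\ol N(r,0;f)=\ol N(r,0;g)$ and it suffices to estimate $\ol N(r,0;f)$. I shall work under the assumption $\Phi\not\equiv 0$; in the degenerate case $\Phi\equiv 0$ one gets $F-1\equiv C(G-1)$ for a constant $C$, and evaluating at any common zero of $f,g$ (where $F=G=0$) forces $C=1$, hence $F\equiv G$, a situation that is disposed of separately (and if $f$ has no zeros the asserted bound holds trivially).

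The heart of the argument is the local analysis. Let $z_{0}$ be a common zero of $f$ and $g$ with multiplicities $p$ and $q$. Because $f(z_{0})=0$ gives $af(z_{0})^{m}+b=b\neq 0$, the denominator does not interfere, so $F=-f^{n}/(af^{m}+b)$ has a zero of exact order $np$ at $z_{0}$, and likewise $G$ has a zero of order $nq$. Consequently $F-1$ and $G-1$ are nonzero there, so both $\frac{F'}{F-1}$ and $\frac{G'}{G-1}$ are holomorphic at $z_{0}$ with zeros of orders $np-1$ and $nq-1$ respectively. Hence $\Phi$ vanishes at $z_{0}$ to order at least $n\min\{p,q\}-1\geq n-1$ (any cancellation of leading coefficients only raises this order). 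Summing over all common zeros, I obtain $(n-1)\,\ol N(r,0;f)\leq N(r,0;\Phi)\leq T(r,\Phi)+O(1)=m(r,\Phi)+N(r,\infty;\Phi)+O(1)$.

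It remains to estimate $T(r,\Phi)$. The lemma on logarithmic derivatives yields $m(r,\Phi)=S(r,F)+S(r,G)=S(r,f)+S(r,g)$, the last step via {\it Lemma \ref{l2.3}} (so that $T(r,F)=nT(r,f)+S(r,f)$, and similarly for $G$). For the poles, $\Phi$ can be singular only at the $1$-points of $F$ or $G$ and at the poles of $F$ or $G$, and each such pole is simple. At a common $1$-point of $F$ and $G$ of equal multiplicity the residues of $\frac{F'}{F-1}$ and $\frac{G'}{G-1}$ coincide and cancel, so only the $1$-points of unequal multiplicity survive, contributing at most $\ol N_{*}(r,1;F,G)$; and since $F,G$ share $(1,s)$ their $1$-point sets coincide, so nothing else is lost here. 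The poles of $F$ occur exactly at the poles of $f$ and at the zeros of $af^{m}+b$, whence $\ol N(r,\infty;F)\leq \ol N(r,\infty;f)+\ol N(r,0;af^{m}+b)$, and symmetrically for $G$. Assembling $N(r,\infty;\Phi)\leq \ol N_{*}(r,1;F,G)+\ol N(r,\infty;F)+\ol N(r,\infty;G)$ into the displayed chain and dividing by $n-1$ produces precisely the claimed inequality. The step I expect to require the most care is this pole analysis of $\Phi$—verifying the residue cancellation at the shared $1$-points so that only $\ol N_{*}(r,1;F,G)$ appears, and confirming that the order-of-vanishing bound $n\min\{p,q\}-1\geq n-1$ may be applied uniformly at every common zero.
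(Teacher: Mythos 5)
Your proposal is correct and follows essentially the same route as the paper: both bound $(n-1)\ol N(r,0;f)$ by $N(r,0;\Phi)$ via the local order-of-vanishing analysis at common zeros, then estimate $T(r,\Phi)$ through the lemma on logarithmic derivatives and the pole analysis $N(r,\infty;\Phi)\leq \ol N_{*}(r,1;F,G)+\ol N(r,\infty;F)+\ol N(r,\infty;G)$. Your explicit treatment of the degenerate case $\Phi\equiv 0$ and of the residue cancellation at shared $1$-points only supplies details the paper leaves implicit.
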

\begin{proof}
	Since $f$, $g$ share $(0,0)$, it follows that \beas\ol N(r,0;f)=\ol N(r,0;g) &\leq& \frac{1}{n-1}N(r,0;\Phi)\\&\leq& \frac{1}{n-1}T(r,\Phi)+O(1)\\&\leq& \frac{1}{n-1}N(r,\infty;\Phi)+S(r,F)+S(r,G)\\&\leq& \frac{1}{n-1}\big(\ol N_{*}(r,1;F,G)+\ol N(r,\infty;F)+\ol N(r,\infty;G)\big)+S(r,f)+S(r,g)\\&\leq& \frac{1}{n-1}\{\ol N_{*}(r,1;F,G)+\ol N(r,\infty;f)+\ol N(r,\infty;g)+\ol N(r,0;af^{m}+b)\\&&+\ol N(r,0;ag^{m}+b)\}+S(r,f)+S(r,g).\eeas
\end{proof}
\section{proofs of the theorems}
\begin{proof}[Proof of Theorem \ref{t1.2}]
	Let us consider $F=-(f^{n} + af^{n-m})/b$ and $G=-(\mathcal{L}^{n} + a\mathcal{L}^{n-m})/b$, where $f$ has finitely many poles. As $f$ and $\mathcal{L}$ share $S_{1}=\{w:P_{1}(w)=0\}$ with weight $s$, then clearly $F$ and $G$ share $(1,s)$. \par It is given that $f$ has finite number of poles, therefore $\ol N(r,\infty;f)=O(\log r)$. As $\mathcal{L}$ has at most one pole, $\ol N(r,\infty;\mathcal{L})=O(\log r)$. 
	Also from {\it{Lemma \ref{l2.11}}} we have $\rho(f)=\rho(\mathcal{L})=1$. Therefore it is obvious that, $S(r,f)=S(r,\mathcal{L})=O(\log r)$. \par Let us consider $H\not\equiv0$. 
	\\{\it{Case I :}} Suppose $w^{n}+aw^{n-m}+b=0$ has no multiple roots. Then using {\it{Lemma \ref{l2.7}}},  and {\it{Lemma \ref{l2.9}}} we have \bea\label{e3.1} &&\frac{n}{2}T(r)\\\nonumber&\leq& 2\big(\ol N(r,0;f)+\ol N(r,0;\mathcal{L})\big)+\sum_{i=1}^{m}\big(N_{2}(r,\delta_{i};f)+N_{2}(r,\delta_{i};\mathcal{L})\big)+\left(\frac{3}{2}-s\right)\ol N_{*}(r,1;F,G)\\\nonumber&&+O(\log r)\\\nonumber &\leq& (2+m)T(r)+\left(\frac{3}{2}-s\right)(\ol N_{L}(r,1;F)+\ol N_{L}(r,1;G)) +O(\log r)\\\nonumber&\leq& (2+m)T(r)+\frac{\left(\frac{3}{2}-s\right)}{s+1}(\ol N(r,0;f)+\ol N(r,0;\mathcal{L}))+O(\log r),
	\eea
	where $T(r)=T(r,f)+T(r,\mathcal{L})$.
	\par Clearly when 
	\beas&(i)& s\geq2,\; n> 2m+4 \;\;or\;\; when\\&(ii)&  s=1,\;\; n> 2m+5\;\; or\;\; when \\&(iii)&  s=0,\;\; n> 2m+10;\eeas  from (\ref{e3.1}) we get a contradiction.
	\\{\it{Case II :}}
	Again considering consider $w^{n}+aw^{n-m}+b=0$ has a multiple root (say $\beta_{j}$), i.e. the equation has $n-1$ distinct  roots. Then proceeding same as in above (\ref{e3.1}) we get \bea\label{e3.2} &&\frac{n}{2}T(r)\\\nonumber&\leq&(2+m)T(r)+\frac{\big(\frac{3}{2}-s\big)}{s+1}\big(\ol N(r,0;f)+\ol N(r,0;\mathcal{L})+N(r,\beta_{j};f)+ N(r,\beta_{j};\mathcal{L})\big)+O(\log r).
	 \eea 
	 	Clearly from (\ref{e3.2}) and in view of the conditions (i), (ii), (iii) given in {\it{Case I}}, 
	 	we arrive at a contradiction again.	
	  \par Therefore $H\equiv 0$ and so integrating both sides we get, \bea\label{e3.3}\frac{1}{G-1}=\frac{A}{F-1}+B,\eea where $A,\;B$ are two constants, $A\not=0$. From {\it{Lemma \ref{l2.3}}} and (\ref{e3.3}) we have, \bea\label{e3.4} T(r,\mathcal{L})=T(r,f)+O(1).\eea At first let $B$ is non-zero. Then $$G-1=\frac{F-1}{A+B(F-1)}.$$ \par If $A-B\not=0$ then zeros of $F+(A-B)/B$ are poles of $G-1=-(\mathcal{L}^{n}+a\mathcal{L}^{n-m}+b)/b$. Now $F+(A-B)/B$ has at least $n-1$ zeros for all $1\leq m< n$. Then $-(\mathcal{L}^{n}+a\mathcal{L}^{n-m}+b)/b $ has more than one pole implies $\mathcal{L}$ has more than one pole, which is a contradiction. \par Therefore $A-B=0$. Then $$G-1=\frac{F-1}{BF}.$$ \par Using the same arguments for  $ m> 2$, one can show $B=0$. But the argument fails if for $m\leq2$, $f$ has two exceptional values (say-$\xi_{1},\xi_{2}$) or for $m=1$, the same has one exceptional value (say-$v\in\{0,-a\}$). Then using the Second Fundamental Theorem we will get
	  a contradiction again. Hence in any case $B=0$.
	   From (\ref{e3.3}) we have \beas G-1&=&\frac{1}{A}(F-1).\\i.e.,\; (\mathcal{L}^{n} + a\mathcal{L}^{n-m}+b)&=&\frac{1}{A}(f^{n} + af^{n-m}+b).\\i.e.,\;  \mathcal{L}^{n} + a\mathcal{L}^{n-m}+b-\frac{b}{A}&=&\frac{1}{A}(f^{n} + af^{n-m}).\eeas Let us consider $A\not=1$ then $b/A\not=b$. So $w^{n}+aw^{m}+b-b/A=0$ has atleast $n-1$ distinct roots $(say\;p_{i},i=1,2,\ldots,n-1)$, then  by the Second Fundamental Theorem, and (\ref{e3.4})  we get from above \beas (n-2)T(r,\mathcal{L})&\leq& \sum_{i=1}^{n-1}\ol N(r,p_{i};\mathcal{L})+\ol N(r,\infty;\mathcal{L})+S(r,\mathcal{L})\\&\leq&\ol N(r,0;f)+\sum_{i=1}^{m}\ol N(r,\delta_{i};f)+O(\log r)\\&\leq&(m+1)T(r,f)+O(\log r), \eeas which implies $n\leq m+3$, a contradiction for $n>2m+4$. \par Therefore our assumption is wrong. Hence $A=1$ and therefore $$F=G.$$ Now, \beas f^{n}-\mathcal{L}^{n}&=&-a(f^{n-m}-\mathcal{L}^{n-m}).\\i.e., \;\mathcal{L}^{m}(h^{n}-1)&=&-a(h^{n-m}-1).\\ i.e., \;\mathcal{L}^{m}&=&-a\frac{h^{n-m}-1}{h^{n}-1}.\eeas At first assume $h\;(=f/\mathcal{L})$ is a non-constant meromorphic function. Then since gcd(m,n)=1, for $n>2m+4$, from above we get $\mathcal{L}$ has more than one pole, which is a contradiction. \par Therefore $h$ is a constant, satisfying $h^{n}-1=h^{n-m}-1=0\implies h=1\implies f=\mathcal{L}$.
	
\end{proof}
\begin{proof}[Proof of Theorem \ref{t1.1}]
	Let us consider $F=-\frac{f^{n}}{af^{m}+b}$ and $G=-\frac{\mathcal{L}^{n}}{a\mathcal{L}^{m}+b}$ , where $f$ has finitely many poles. As $f$ and $\mathcal{L}$ share $S=\{w:P(w)=0\}$ with weight $s$, i.e. $E_{f}(S,s)=E_{g}(S,s)$  then clearly $F$ and $G$ share $(1,s)$. Let us consider $H\not\equiv0$.
	\\{\it{Case I :}} Let us consider $w^{n}+aw^{m}+b=0$ has no multiple root. Then using {\it{Lemma \ref{l2.6}}} and {\it{Lemma \ref{l2.8}}} we have \bea\label{e3.5}\frac{n}{2}T(r)&\leq& 2\big(\ol N(r,0;f)+\ol N(r,0;\mathcal{L})\big)+\sum_{i=1}^{m}\big(N_{2}(r,\gamma_{i};f)+N_{2}(r,\gamma_{i};\mathcal{L})\big)\\\nonumber&&+\left(\frac{3}{2}-s\right)\ol N_{*}(r,1;F,G)+O(\log r)\\\nonumber&\leq& (2+m)T(r)+\left(\frac{3}{2}-s\right)\big(\ol N_{L}(r,1;F)+\ol N_{L}(r,1;G)\big)+O(\log r)\\\nonumber&\leq&(2+m)T(r)+\frac{\left(\frac{3}{2}-s\right)}{s+1}\big(\ol N(r,0;f)+\ol N(r,0;\mathcal{L})\big)+O(\log r).\eea
When
	\beas&(i)&\;s\geq2,\; n> 2m+4\; or\; when\;\\&(ii)&  s=1,\; n> 2m+5 \;or \;when\;\\&(iii)&  s=0,\;\; n> 2m+10;\eeas
	
  from (\ref{e3.5}), we get a contradiction.
	\\{\it{Case II :}}
Consider $w^{n}+aw^{m}+b=0$ has a multiple root (say $\alpha_{j}$), i.e. the equation has $n-1$ distinct  roots. Then proceeding same as in (\ref{e3.5}) we get \bea\label{e3.6} \frac{n}{2}T(r)&\leq&(2+m)T(r)+\frac{(\frac{3}{2}-s)}{s+1}\big(\ol N(r,0;f)+\ol N(r,0;\mathcal{L})+N(r,\alpha_{j};f)+ N(r,\alpha_{j};\mathcal{L})\big)\\\nonumber&&+O(\log r).\eea
	\par Clearly from (\ref{e3.6}) and for the conditions (i), (ii), (iii) given in {\it{Case I}}, we again arrive at a contradiction.
\par	Therefore from {\it{Case I}} and {\it{Case II}} we have $H\equiv 0$.
In this respect, we  have $F$ and $G$ share 1CM. \par On integration we have \bea\label{e3.7} F=\frac{AG+B}{CG+D},\eea where $A$, $B$, $C$, $D$ are constants such that $AD-BC\not=0$. Thus by {\it{Lemma \ref{l2.3}}} \bea\label{e3.8} T(r,f)=T(r,\mathcal{L})+S(r,\mathcal{L}).\eea
\par As $AD-BC\not=0$, so $A=C=0$ never occur. Thus we consider the following cases:
\\{\it{Case 1 :}} $AC\not=0$ \par In this case \beas F-\frac{A}{C}=\frac{BC-AD}{C(CG+D)}.\eeas
So, $$\ol N(r,\frac{A}{C};F)=\ol N(r,\infty;G).$$Using  the Second Fundamental Theorem and (\ref{e3.8}) we get \beas &&nT(r,f)+O(1)=T(r,F)\\&\leq&\ol N(r,0;F)+\ol N(r,\infty;F)+\ol N(r,\frac{A}{C};F)+S(r,F)\\&\leq&\ol N(r,0;f)+\ol N(r,\infty;f)+\ol N(r,0;af^{m}+b)+\ol N(r,\infty;\mathcal{L})+\ol N(r,0;a\mathcal{L}^{m}+b)+S(r,f)\\&\leq& (2m+1)T(r,f)+O(\log r),\eeas which is a contradiction for $n>2m+4$.
\\{\it{Case 2 :}}\;$AC=0$.
\\{\it{Subcase 2.1 :}}\;$A=0$ and $C\not=0$. \par In this case $B\not=0$ and let us suppose $D\not=0$. Then  $$F=\frac{1}{\gamma G+\delta},$$ where $\gamma=C/B\;,\;\delta=D/B$. If $F$ has no one point, then by the Second Fundamental Theorem we get \beas&&nT(r,f)+O(1)=T(r,F)\\&\leq&\ol N(r,0;F)+\ol N(r,\infty;F)+\ol N(r,1,F)+S(r,F)\\&\leq&\ol N(r,0;f)+\ol N(r,\infty;f)+\ol N(r,0;af^{m}+b)+S(r,f)\\&\leq& (m+1)T(r,f)+O(\log r),\eeas gives a contradiction for $n>2m+4$.

  \par Therefore $1$ is not a exceptional value of $F$. So $\gamma+\delta=1$ and $\gamma\not=0$.\\ So, $$F=\frac{1}{\gamma G+1-\gamma}.$$   Since $D\not=0\implies\delta\not=0$, we have $\gamma\not=1$ and then by the Second Fundamental Theorem and (\ref{e3.8}) we get \beas  &&nT(r,\mathcal{L})+O(1)=T(r,G)\\&\leq&  \ol N(r,0;G)+\ol N(r,\infty;G)+\ol N(r,-(1-\gamma)/\gamma;G)+S(r,G)\\&\leq& \ol N(r,0;\mathcal{L})+\ol N(r,\infty;\mathcal{L})+\ol N(r,0;a\mathcal{L}^{m}+b)+\ol N(r,\infty;f)+\ol N(r,0;af^{m}+b)\\&&+S(r,g)\\&\leq& (2m+1)T(r,\mathcal{L})+O(\log r),\eeas
 which gives a contradiction for $n>2m+4$.
\par Thus our assumption is wrong, so $D=0$ and $\gamma=1$ which implies $FG=1$, impossible by {\it{Lemma \ref{l2.10}}}.
\\{\it{Subcase 2.2 :}}\;$A\not=0$ and $C=0$. \par In this case $D\not=0$ and $$F=\lambda G+\mu,$$ where $\lambda=A/D\;$ and $\mu=B/D$ .\par If $F$ has no one point then proceeding similarly as above we get a contradiction.
 \par Thus $\lambda +\mu=1$ with $\lambda \not=0$ and then $$F=\lambda G+1-\lambda.$$ Then $\ol N(r,-(1-\lambda)/\lambda;G)=\ol N(r,0;F).$
\par Using the Second Fundamental Theorem and (\ref{e3.8}) we get \beas &&nT(r,\mathcal{L})+O(1)=T(r,G)\\&\leq&  \ol N(r,0;G)+\ol N(r,\infty;G)+\ol N(r,-(1-\lambda)/\lambda;G)+S(r,G)\\&\leq&\ol N(r,0;\mathcal{L})+\ol N(r,\infty;\mathcal{L})+\ol N(r,0;a\mathcal{L}^{m}+b)+\ol N(r,0;f)+S(r,g)\\&\leq& (m+2)T(r,\mathcal{L})+O(\log r), \eeas  a contradiction for $n>2m+4$.
\par Therefore $\lambda=1$ and hence \bea\nonumber F &=& G.\\\nonumber i.e.,\;-\frac{f^{n}}{af^{m}+b}&=&-\frac{\mathcal{L}^{n}}{a\mathcal{L}^{m}+b}.\\\label{e3.9}i.e.,\;a\mathcal{L}^{m}(1-h^{n-m})&=&-b(1-h^{n}).\eea \par At first let us assume $h\;(=\frac{\mathcal{L}}{f})$ is a non-constant meromorphic function. \par Then we have, \beas\mathcal{L}^{m}&=&-\frac{b(1-h^{n})}{a(1-h^{n-m})}\\&=&\frac{-b(h-u)(h-u^{2})\ldots(h-u^{n-1})}{a(h-v)(h-v^{2})\ldots(h-v^{n-m-1})},\eeas where gcd(n,m)=1 and $u=\exp\frac{2\pi i}{n}\;,\;v=\exp\frac{2\pi i}{n-m}$.\par Since $\mathcal{L}$ has at most one pole in $\mathbb{C},$ it follows that $h$ has at least $n-m-2$ exceptional values among $\{v,v^{2},\ldots,v^{n-m-1}\}$. Clearly this is a contradiction for $n>2m+4$.\par Hence in that case $h$ is a constant meromorphic function. Now from (\ref{e3.9}) we have $h=1\implies f=\mathcal{L}.$
\end{proof}
\begin{proof}[Proof of Theorem \ref{t1.3}]
Let $F$ and $G$ be defined as in {\it{ Theorem 1.1}}. Clearly $F$, $G$ share $(1,s)$. We consider two cases.\\{\it{Case I}}\;: \; Let $c=0$ and $H\not\equiv 0$. It is  given that $f$ and $\mathcal{L}$ share $(0,0)$. From {\it{Lemma \ref{l2.2}}} we have \beas N(r,\infty;H)&\leq& \ol N_{*}(r,0;f,\mathcal{L})+\ol N_{*}(r,1;F,G)+\ol N(r,0;af^{m}+b\mid\geq 2)+\ol N(r,0;a\mathcal{L}^{m}+b\mid\geq 2)\\&&+\ol N_{0}(r,0;F')+\ol N_{0}(r,0;G')+O(\log r).\eeas Using this and proceeding same as in {\it{Lemma \ref{l2.6}}} we obtain \bea\label{e3.10}\frac{n}{2}T(r)&\leq& 3\ol N(r,0;f)+\sum_{i=1}^{m}\big(N_{2}(r,\gamma_{i};f)+N_{2}(r,\gamma_{i};\mathcal{L})\big)+\left(\frac{3}{2}-s\right)\ol N_{*}(r,1;F,G)\\\nonumber&&+O(\log r),\eea Using {\it{Lemma \ref{l2.12}}} in (\ref{e3.10}) we have \bea\label{e3.11}\;\;\;\;\;\frac{n}{2}T(r)&\leq&\frac{3m}{n-1}T(r)+\sum_{i=1}^{m}\big(N_{2}(r,\gamma_{i};f)+N_{2}(r,\gamma_{i};\mathcal{L})\big)+\left(\frac{3}{2}+\frac{3}{n-1}-s\right)\ol N_{*}(r,1;F,G)\\\nonumber&&+O(\log r).\eea

 First consider $w^{n}+aw^{m}+b=0$ has a multiple root (say $\alpha_{j}$). Then by {\it{Lemma \ref{l2.8}}} and proceeding in the same way as done in {\it{Case II }} of {\it{ Theorem \ref{t1.1}}}, from (\ref{e3.11}) and for \beas&(i)& s\geq2, \; n\geq2m+3\;or\;for\\&(ii)&  s=1,\;\; n\geq2m+4\;or\;for\\&(iii)& s=0,\;\; n\geq 2m+9,\eeas we can get a contradiction.\par Next suppose $w^{n}+aw^{m}+b=0$ has no multiple roots, then dealing in the same way and by the same arguments, again we can get a contradiction. Therefore $H\equiv 0$. 
 
  So we  have $F$ and $G$ share $(1,\infty)$. \par Now by integration we have \bea\nonumber
   F=\frac{AG+B}{CG+D},\eea where $A, B, C, D$ are constant such that $AD-BC\not=0$. \par Again proceeding in the same manner as done in the last part of {\it{ Theorem \ref{t1.1}}} we have \beas F&=&G.\\i.e.,\; \frac{f^{n}}{af^{m}+b}&=&\frac{\mathcal{L}^{n}}{a\mathcal{L}^{m}+b}.\eeas Therefore $f$ and $\mathcal{L}$ share $0$ CM. Then considering $h(=\frac{\mathcal{L}}{f})\not=1$ we have  \beas\mathcal{L}^{m}
   &=&\frac{-b(h-u)(h-u^{2})\ldots(h-u^{n-1})}{a(h-v)(h-v^{2})\ldots(h-v^{n-m-1})},\eeas where gcd(n,m)=1 and $u=\exp\frac{2\pi i}{n}\;,\;v=\exp\frac{2\pi i}{n-m}$.
   
   The possible poles of $\mathcal{L}$ can come from poles of $h$ and $v^{j}\;(j=1,2,\ldots,n-m-1)$ points of $h$. 
   \par Since $\mathcal{L}$ has at most one pole in $\mathbb{C},$ it follows that for $n\geq2m+3$, among these $n-m$ values of $h$, at least  $n-m-1\geq m+2\geq 3$ are exceptional values, a contradiction. 
  Hence in that case $h=1\implies f\equiv \mathcal{L}$. 
 \\{\it{Case II}}\;:
 \;It is given that $c\;(\not\in S)$ is a root of $nz^{n-m}+ma=0$. Therefore $c$ is not a zero of $P(w)$.  Aiso it is given $E_{f}(S,s)=E_{g}(S,s)$ and $E_{f}(\{c\},t)=E_{g}(\{c\},t)$. Let us define $F=-\frac{f^{n}+af^{m}}{b}$ , $G =-\frac{\mathcal{L}^{n}+a\mathcal{L}^{m}}{b}$. Then clearly $F$ and $G$ share $(1,s)$. 
\par According to the hypothesis we know that $P(w)$ has no multiple zeros, i.e. $\alpha_{i}$, $i=1,2,\ldots,n$ are distinct zeros of $P(w)$. Without loss of generality we may assume $c=a_{n-m}$, where $a_{i}\;(i=1,2,\ldots,n-m)$ are given in the statement of {\it{Theorem \ref{t1.3}}}.
\par Assuming $H\not=0$ and from  {\it{Lemma \ref{l2.2}}} we have \beas N(r,\infty;H)&\leq& \ol N(r,0;f)+\ol N(r,0;\mathcal{L})+\sum_{i=1}^{n-m-1}\{\ol N(r,a_{i};f)+\ol N(r,a_{i};\mathcal{L})\}+\ol N_{*}(r,c;f,\mathcal{L})+\ol N_{\odot}(r,0;f')\\&&+\ol N_{*}(r,1;F,G)+\ol N_{\odot}(r,0;\mathcal{L}')+O(\log r),\eeas where 
 $\ol N_{\odot}(r,0;f^{'})$ is the reduced counting function of those zeros of $f'$ which are not zeros of $f(f-a_{1}).\ldots(f-a_{n-m})(F-1)$.
 
 Next by the Second Fundamental Theorem and {\it{ Lemma \ref{l2.5}}}, {\it{ Lemma \ref{l2.1}}} we get, 
 \beas&& (2n-m)T(r)\\ &\leq& \ol N(r,0;f)+\ol N(r,0;\mathcal{L})+\ol N(r,\infty;f)+\ol N(r,\infty;\mathcal{L})+\ol N(r,1;F)+\ol N(r,1;G)\nonumber\\\nonumber&&+ \sum_{i=1}^{n-m}\big(\ol N(r,a_{i};f)+\ol N(r,a_{i};\mathcal{L})\big)-N_{\odot}(r,0;f')-N_{\odot}(r,0;\mathcal{L}')+S(r,f)+S(r,\mathcal{L}).\nonumber\eeas i.e., \beas&&\frac{n}{2}T(r)\\&\leq& 2(\ol N(r,0;f)+\ol N(r,0;\mathcal{L})\big)+\left(\frac{3}{2}-s\right)\ol N_{*}(r,1;F,G)+ (k-1)T(r) +\ol N_{L}(r,c;f)+\ol N_{L}(r,c;\mathcal{L})\\&&+O(\log r).\eeas
 i.e., \be \label{e3.12}\frac{n}{2}T(r)\leq(k+1)T(r)+\left(\frac{3}{2}-s\right)\ol N_{*}(r,1;F,G)+\frac{1}{t+2}( N(r,c;f)+ N(r,c;\mathcal{L}))+O(\log r),\ee where $k=n-m\geq 1$. Next using (\ref{e3.12}) and proceeding in the same way as done in {\it{Case I }} of {\it{Theorem \ref{t1.1}}}, for \beas &(i)& \;s\geq 2,\;t=1\;\;and\;\;n\geq 2k+3, or for \\&(ii)& \;s=1,\;t=0 \;\;and\;\; n\geq 2k+4, or for \\&(iii)& s=0,\;t=0\;\;and\;\; n\geq 2k+7,\eeas we arrive at a contradiction.  
 
   \par Therefore $H\equiv 0$. Then integrating both sides we get, \beas\frac{1}{G-1}=\frac{A}{F-1}+B,\eeas where $A(\not=0),\;B$ are two constants. Again proceeding in the same manner as done in the last part of {\it{ Theorem \ref{t1.2}}} we have \beas  G-1&=&\frac{1}{A}(F-1)\\(\mathcal{L}^{n} + a\mathcal{L}^{m}+b)&=&\frac{1}{A}(f^{n} + af^{m}+b)\\\mathcal{L}^{n} + a\mathcal{L}^{m}+b-\frac{b}{A}&=&\frac{1}{A}(f^{n} + af^{m})\eeas and dealing in the same way as in the rest part of {\it{Theorem 1.2}} we will get $f=\mathcal{L}$.
 \end{proof}

\end{document}